\definecolor{linkblue}{RGB}{1,1,190}
\definecolor{citered}{RGB}{190,1,1}  
\newtheorem{theorem}{Theorem}
\newtheorem{lemma}[theorem]{Lemma}
\newtheorem{proposition}[theorem]{Proposition}
\newcommand{\N}{\mathbb N}
\DeclareMathOperator{\Pic}{Pic} 
 \DeclareMathOperator{\Cl}{Cl}
\DeclareMathOperator{\spec}{Spec} \DeclareMathOperator{\ord}{ord}
\DeclareMathOperator{\Reg}{Reg}
\def\p{\ensuremath\mathfrak{p}}
\def\O{\ensuremath\mathcal{O}}
\def\f{\ensuremath\mathfrak{f}}
\begin{document}
\title[A Characterization of Transfer Krull Orders in Dedekind Domains]{A Characterization of Transfer Krull Orders in Dedekind Domains with Torsion Class Group}
\author{Balint Rago}
\address{University of Graz, NAWI Graz, Department of Mathematics and Scientific Computing, Heinrichstraße 36,
8010 Graz, Austria}

\thanks{This work was supported by the Austrian Science Fund FWF, Project Number W1230}

\email{balint.rago@uni-graz.at}

\subjclass{11R27, 13A05, 13F05}

\keywords{orders, Dedekind domains, transfer Krull}

\begin{abstract}
We establish a characterization (under some natural conditions) of those orders in Dedekind domains which allow a transfer  homomorphism to a monoid of zero-sum sequences. As a consequence, the inclusion map to the Dedekind domain is a transfer homomorphism, with the exception of a particular case. The arithmetic of Krull and Dedekind domains is well understood, and the existence of a transfer homomorphism implies that the order and the associated Dedekind domain share the same arithmetic properties. This is not the case for arbitrary orders in Dedekind domains. 
\end{abstract}

\maketitle

\section{Introduction}

A (commutative integral) domain is factorial if and only if it is Krull with trivial class group. Every Krull monoid (in particular, every Krull and every Dedekind domain) has a transfer homomorphism to the monoid of zero-sum sequences over the subset of classes of its class group that contain prime divisors.
Transfer homomorphisms allow to pull back arithmetic invariants (such as sets of lengths) from the target monoid back to the original object of interest. Thus, the existence of transfer homomorphisms justifies the classic philosophy that the arithmetic of rings of integers in algebraic number fields and of Dedekind domains depends only on their class group. Monoids of zero-sum sequences are Krull monoids, which are usually studied with combinatorial methods (\cite{Ge-Ru09, Gr22a}). If the class group is finite, then additive combinatorics plays a central role. It provides precise results on sets of lengths (and on invariants controlling their structure, such as elasticities and sets of distances) in terms of the group invariants.

Factorization theory studies the arithmetic of a wide range of domains, including weakly Krull domains, monoid algebras, and rings of integer valued polynomials.  Orders in Dedekind domains are the simplest class of non-Krull domains in this scenario. They are weakly Krull, whence they admit transfer homomorphisms to $T$-block monoids (constructed from zero-sum sequences and localizations at primes containing the conductor). 
They allow to obtain arithmetic finiteness results, but in general these results are less precise than those obtained for Krull domains (see, for example, \cite[Theorem 3.7.1]{Ge-HK06a}). But not only are the methods weaker but the arithmetic of general non-principal orders shows different features compared with the arithmetic of the corresponding principal orders (which are Dedekind and hence Krull). To give an example, the set of distances of a ring of integers (whose class group has at least three elements) is an interval with minimum one. This property does not even hold for arbitrary orders in quadratic number fields (\cite{Re23a}).

In his seminal paper \cite{Sm13a}, Smertnig characterized the maximal orders in central simple algebras over algebraic number fields which allow a transfer homomorphism to a monoid of zero-sum sequences. The characterization shows that, apart from an exceptional case, these maximal orders (which are non-commutative Dedekind domains) allow such a transfer homomorphism. His work initiated the search for rings and monoids, that are not Krull, but nevertheless allow a transfer homomorphism to a monoid of zero-sum sequences (or equivalently, to a commutative Krull monoid). 
They are called transfer Krull and include non-commutative rings, Noetherian domains that are not integrally closed, and others (e.g, \cite{Sm19a, Ge-Sc-Zh17b, Ba-Sm22a, Ba-Re22a}). At the same time, other classes of rings and monoids were revealed that do not allow such transfer homomorphisms. They include power monoids, rings of integer-valued polynomials, and further classes of commutative and non-commutative rings (e.g., \cite{Fa-Tr18a, Fr-Na-Ri19a, B-B-N-S23a}).  For a list of transfer Krull and non-transfer Krull monoids and domains,  we refer to the survey \cite{Ge-Zh20a}.

The arithmetic of orders in Dedekind domains (with a focus on orders in algebraic number fields) has been studied since decades and found a renewed interest in recent years (e.g., \cite{Br-Ge-Re20, Ge-Ka-Re15a, Ph12b, Pi00, G-L-T-Z21, Po-24, Po-24a, Ra-23}).
In the present paper, we establish an algebraic characterization of transfer Krull orders in Dedekind domains. The mild conditions, which we impose, are satisfied by  rings of integers in algebraic number fields (\cite[Corollary 4.4.4]{HK20a}), by  holomorphy domains in global fields and associated $S$-class groups (\cite[Corollary 7.9.3]{HK22a}), and by other, more abstract classes of Dedekind domains (see for example  \cite{Ch-Ge24a} and the references given there). 

\begin{theorem} \label{thm}
  Let $R$ be a Dedekind domain with torsion class group and let $\mathcal O \subseteq R$ be an order with conductor $\mathfrak f$ such that $R/\mathfrak f$ is finite. 
  \begin{enumerate}
  \item Suppose that $|\Cl (R)| \ne 2$ and that every class of $\Cl (R)$ contains a prime ideal coprime to $\mathfrak f$. Then $\mathcal O$ is transfer Krull if and only if the following two conditions hold. \smallskip
      \begin{itemize}
      \item[(a)] $\mathcal O\cdot R^{\times} = R$. \smallskip
      
      \item[(b)] $\mathsf v_{\mathfrak p} (u)=1$ for all atoms $u \in \mathcal O_{\mathfrak p}$ and all $\mathfrak p \in \spec ( \mathcal O)$. 
      \end{itemize} 

\smallskip
  \noindent Moreover, if $\mathcal O$ is transfer Krull, then the inclusion $\mathcal O^{\bullet} \hookrightarrow R^{\bullet}$ is a transfer homomorphism and there is a transfer homomorphism $\theta \colon \mathcal O^{\bullet} \to \mathcal B (\Cl (R) )$, where $\mathcal B (\Cl (R) )$ is the monoid of zero-sum sequences over the class group $\Cl (R)$. \\
  
  \item Suppose that $|\Cl (R)| = 2$ and that every class of $\Pic (\mathcal O)$ contains a prime ideal coprime to $\mathfrak f$. Then $\mathcal O$ is transfer Krull if and only if the following two conditions hold. \smallskip
      \begin{itemize}
      \item[(a)] $\mathcal O \cdot R^{\times} = R$. \smallskip
      
      \item[(b)] $\mathsf v_{\mathfrak p} (u) \in \{1,2\}$ for all atoms $u \in \mathcal O_{\mathfrak p}$ and all $\mathfrak p \in \spec ( \mathcal O)$ and if the prime ideal lying over $\p$ is principal, then $\mathsf v_{\mathfrak p} (u) =1$ for all atoms $u\in\O_\p$.
    
      \end{itemize}
  \end{enumerate}
 
\end{theorem}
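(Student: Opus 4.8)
The proof reduces the transfer-Krull property of $\mathcal{O}$ to that of an associated $T$-block monoid and then reads the two conditions off its structure. Since an order in a Dedekind domain is weakly Krull with nonzero conductor, the structure theory of weakly Krull monoids (\cite[Theorem 3.7.1]{Ge-HK06a}) supplies a transfer homomorphism
\[
  \theta_0 \colon \mathcal{O}^{\bullet} \longrightarrow \mathcal{B}(G_{\mathfrak f}, T),
\]
to a $T$-block monoid, where $G_{\mathfrak f}$ is the class group controlling the regular (i.e.\ $\mathfrak f$-coprime) part and $T=\coprod_{\mathfrak p\supseteq\mathfrak f}(\mathcal{O}_{\mathfrak p}^{\bullet})_{\mathrm{red}}$ assembles the reduced local monoids at the finitely many singular primes. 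The prime-realization hypotheses are precisely what let me identify $G_{\mathfrak f}$ with the controlling group --- $\Cl(R)$ in part (1) and $\Pic(\mathcal{O})$ in part (2) --- and realize every class by a prime divisor. Because transfer homomorphisms compose and a monoid is transfer Krull exactly when its image under a transfer homomorphism is, it suffices to decide when $\mathcal{B}(G_{\mathfrak f},T)$ is transfer Krull.

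\emph{Sufficiency.} Assuming (a) and (b), condition (a), namely $\mathcal{O}\cdot R^{\times}=R$, is exactly surjectivity up to units of the inclusion $\mathcal{O}^{\bullet}\hookrightarrow R^{\bullet}$, while (b) forces each singular local monoid to be half-factorial with factorization length equal to the $\mathfrak p$-adic valuation. Hence any factorization of $a\in\mathcal{O}^{\bullet}$ in $R^{\bullet}$ lifts to $\mathcal{O}^{\bullet}$ --- there is no hidden irreducible of valuation $\ge 2$ to obstruct it --- so the inclusion satisfies the lifting axiom and is a transfer homomorphism. Composing with the canonical transfer $R^{\bullet}\to\mathcal{B}(\Cl(R))$ gives the map $\theta$ of the ``moreover'' clause. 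In part (2) the target $\mathcal{B}(\Z/2\Z)$ is factorial, and a valuation-$2$ atom over a non-principal prime $\mathfrak P$ (so that $\mathfrak P^{2}$ is principal) corresponds to an atom of shape $(g,g)$ over the nontrivial class; this is exactly why such atoms are admissible, whereas a valuation-$2$ atom over a principal prime would have to map to a non-atom, which no transfer homomorphism allows.

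\emph{Necessity.} I argue contrapositively. If (a) fails, a units-adjusted element of $R$ escapes $\mathcal{O}$, forcing a mismatch between the class group realized by $\mathcal{O}$ and the controlling group, which no transfer homomorphism to a monoid of zero-sum sequences can accommodate. If (b) fails, I take an atom $u\in\mathcal{O}_{\mathfrak p}$ of forbidden valuation (valuation $\ge 2$ in part (1); valuation $\ge 3$, or valuation $2$ over a principal prime, in part (2)) and, using prime-realization to supply atoms in prescribed classes, assemble an element of $\mathcal{O}^{\bullet}$ whose set of lengths exhibits a gap, elasticity, or atom-image incompatibility that no set of lengths over any $\mathcal{B}(G_0)$ can display; since transfer homomorphisms preserve all sets of lengths, this contradicts transfer-Krullness.

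\textbf{The hard part is the necessity direction.} It requires constructing, case by case, explicit elements witnessing forbidden sets of lengths when (b) fails, and in the degenerate regime $|\Cl(R)|=2$ --- where $\mathcal{B}(\Z/2\Z)$ is factorial and most length-based obstructions collapse --- carrying out the finer analysis that distinguishes principal from non-principal primes and pins down exactly which valuation-$2$ atoms are admissible.
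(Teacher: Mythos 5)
Your reduction to the $T$-block monoid contains the central gap. You assert that ``a monoid is transfer Krull exactly when its image under a transfer homomorphism is,'' and on that basis replace $\O^\bullet$ by $\mathcal{B}(G_\f,T)$. Composition of transfer homomorphisms gives only one direction: if the \emph{target} is transfer Krull, then so is the source. The converse implication --- which is exactly what your necessity argument needs, namely $\O$ transfer Krull $\Rightarrow$ $\mathcal{B}(G_\f,T)$ transfer Krull --- does not follow from composition: a transfer homomorphism $\O^\bullet\to K$ with $K$ Krull need not factor through $\theta_0$, and no such descent result is available. The paper avoids this issue entirely by working with Proposition \ref{tkrull}: $\O$ is transfer Krull if and only if there is a Krull overmonoid $\O^\bullet\subseteq H\subseteq\mathsf{q}(\O^\bullet)$ for which the inclusion is a transfer homomorphism, and every necessity argument is carried out inside this $H$ (using $R^\bullet\subseteq H$ and the atom/unit comparisons of Lemma \ref{basic}). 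A second error in your setup: you claim the prime-realization hypotheses let you identify $G_\f$ with $\Cl(R)$ in part (1). The class group controlling the $\f$-coprime part of $\O$ is $\Pic(\O)$, not $\Cl(R)$, and $\Pic(\O)\simeq\Cl(R)$ is not a hypothesis but a major \emph{conclusion} from transfer Krullness (Proposition \ref{pic}, essentially condition (a)); proving it occupies all of Section 3, requires $|\Cl(R)|\neq 2$, and in part (2) it must instead be extracted from the stronger Picard-group hypothesis via Lemma \ref{abs}.

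Even granting the reduction, the necessity direction --- which you yourself flag as the hard part --- remains a promissory note, and the tool you propose (sets of lengths over some $\mathcal{B}(G_0)$) cannot carry it. The paper's necessity proofs are not length computations: they rest on the structure of the Krull overmonoid $H$ ($\mathcal{A}(\O)\subseteq\mathcal{A}(H)$, $H^\times\cap\O=\O^\times$, and $\mathcal{A}(\O)\subseteq\mathcal{A}(R)$ once the root-extension property is established in Proposition \ref{root2}), on a delicate chain of constructions with absolutely irreducible elements (Lemmas \ref{abs}--\ref{beta}) leading to Proposition \ref{pic}, then on the valuation bounds of Proposition \ref{value}, and finally on a separate argument eliminating valuation-$2$ atoms at classes of order $2$ when $|\Cl(R)|\neq 2$. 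None of these constructions appears in your sketch. Your sufficiency argument also fails for part (2): when a valuation-$2$ atom exists over a non-principal prime $P$, the inclusion $\O^\bullet\hookrightarrow R^\bullet$ is \emph{not} a transfer homomorphism (final Remark of the paper: an atom $a\in\O$ with $aR=P^2Q^2$ factors in $R$ as a product of two elements generating $PQ$, and this factorization does not lift to $\O$), so neither your lifting strategy nor composition with $R^\bullet\to\mathcal{B}(\Z/2\Z)$ can work there; the paper instead proves $\mathcal{A}(\O)\subseteq\mathcal{A}(R)$, concludes that $\O$ is half-factorial because $R$ is, and uses the fact that half-factorial monoids are transfer Krull.
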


\smallskip
Theorem \ref{thm} implies that the arithmetic results derived for $R$ hold true for a transfer Krull order $\mathcal O \subseteq R$ as well. If $\Cl (R)$ is finite, then  a survey of Schmid \cite{Sc16a} provides a good overview of results achieved with methods from additive combinatorics. If $\Cl (R)$ is infinite, then every finite nonempty subset of $\N_{\ge 2}$ occurs as a set of lengths of $R$ (\cite[Theorem 7.4.1]{Ge-HK06a}), whence also as a set of lengths of $\mathcal O$.

\smallskip 
The characterization in the case $|\Cl(R)|=2$ is a generalization of the result in \cite{Ra-23}, where half-factorial orders in algebraic number fields are studied. We discuss the particularities of this case in a final remark at the end of Section 4. \\

We will proceed as follows. In Section 2, we recall several definitions and concepts from factorization theory, in particular the notion of transfer Krull. Moreover, we present some algebraic properties of Dedekind domains and orders. \\

In Section 3, we study the relationship of regular elements of transfer Krull orders with the monoid $\Reg_\f(R)$ and we prove that $\Pic(\O)\simeq \Cl(R)$, except possibly in the special case $|\Cl(R)|\neq 2$. \\

In the final section, we show that the Spec-map is bijective and then turn our attention to the irregular localizations $\O_\p$, before proving Theorem \ref{thm}.

\section{Notation and Preliminaries}

\subsection[short]{Monoids and factorization theory}

Let $H$ be a commutative semigroup with identity. $H$ is called \textit{cancellative} if for all $a,b,c\in H$ with $ac=bc$, it follows that $a=b$. \bigskip

\begin{center}
    \textit{Throughout this paper, a monoid is always a commutative, cancellative semigroup with identity.} 
\end{center}

\bigskip
We denote by $\mathbb{N}$ the set of positive integers and by $\mathbb{N}_0$ the set of non-negative integers. For $n,m\in\mathbb{N}_0$, we write $[n,m]=\{x\in\mathbb{N}_0 \ | \ n\leq x\leq m\}$.
Let $H$ be a multiplicatively written monoid with identity $1_H$. We let $H^\times$ denote the \textit{group of units} (invertible elements) of $H$ and by $H_{\text{red}}=H/H^\times=\{aH^\times \ |\  a\in H\}$ the associated reduced monoid. In general, we call a monoid $H$ \textit{reduced} if $H^\times=\{1\}$. A subset $T\subseteq H$ is called a \textit{submonoid} of $H$ if $1_H\in T$ and $ab\in T$ for all $a,b\in T$. We let $\mathsf{q}(H)=\{ab^{-1}\ | \ a,b\in H\}$ denote the \textit{quotient group} of $H$. This group is uniquely determined up to canonical isomorphism. Any monoid $T$ between $H$ and $\mathsf{q}(H)$ is called an \textit{overmonoid} of $H$. For $a,b\in H$, we set $a\sim_H b$ if $a=\varepsilon b$ for some $\varepsilon\in H^\times$ and we say that $a$ and $b$ are \textit{associated} in $H$. \\

Let $a$ be a non-unit of $H$. Then $a$ is called an \textit{atom} if $a$ is not the product of two non-units of $H$ and $a$ is said to be \textit{prime} if, whenever $a$ divides $bc$ for elements $b,c\in H$, then $a$ divides $b$ or $c$. Moreover, we say that an atom $a$ is \textit{absolutely irreducible} if, for every $n\in\mathbb{N}$, $a^n$ has precisely one factorization into atoms (up to associates). We denote by $\mathcal{A}(H)$ the set of atoms of $H$ and say that $H$ is \textit{atomic} if every non-unit can be written as a finite product of atoms. For each non-unit $a\in H$, we let $\mathsf{L}_H(a)=\{k\in\mathbb{N}\ | \ a \text{ is a product of } k \text{ atoms of } H  \}$ be the \textit{set of lengths of} $a$ and we set $\mathsf{L}_H(a)=\{0\}$ if $a\in H^\times$. An atomic monoid is said to be \textit{factorial} if every element of $H$ has precisely one factorization up to associates, or equivalently, if every atom of $H$ is prime. 
Moreover, $H$ is called \textit{half-factorial} if $|\mathsf{L}_H(a)|=1$ for every $a\in H$. Clearly, every factorial monoid is half-factorial. Let $R$ be an integral domain. We will denote the multiplicative monoid of non-zero elements of $R$ by $R^\bullet$. All arithmetic terms defined for monoids carry over to domains and we set $\mathcal{A}(R)=\mathcal{A}(R^\bullet)$ and so on.
\\

Let $G$ be an additively written, abelian group and let $\mathcal{F}(G)$ be the (multiplicative) free abelian monoid over $G$. For not necessarily distinct elements $g_1,\ldots,g_n\in G$, we denote by $\sigma(g_1\cdot\ldots\cdot g_n)=g_1+\ldots+g_n\in G$ the \textit{sum} of the sequence $g_1\cdot\ldots\cdot g_n\in\mathcal{F}(G)$. Moreover, we denote the monoid of all sequences $x\in\mathcal{F}(G)$ with $\sigma(x)=0$ by $\mathcal{B}(G)$ and we call $\mathcal{B}(G)$ the \textit{monoid of zero-sum sequences} over $G$. \\

We denote by \[\widehat{H}=\{x\in\mathsf{q}(H) \ | \ \text{there exists some } c\in H, \text{ such that } cx^n\in H \text{ for all } n\in\mathbb{N} \}\] the \textit{complete integral closure} of the monoid $H$ and we say that $H$ is completely integrally closed if $H=\widehat{H}$. A monoid $H$ is called \textit{Krull} if $H$ is completely integrally closed and satisfies the ascending chain condition for $v$-ideals. For the definition of $v$-ideals and a detailed treatment of Krull monoids, we refer to \cite{Ge-HK06a}. If $R$ is an integral domain, then $R^\bullet$ is a Krull monoid if and only if $R$ is a Krull domain. Every Krull monoid $H$ has a \textit{class group} $\mathcal{C}(H)$ associated to it.  \\

Let $H$ and $B$ be monoids. A monoid homomorphism $\theta:H\to B$ is said to be a \textit{transfer homomorphism} if the following two properties are satisfied. \begin{itemize}
  \item[\textbf{(T1)}] $B=\theta(H)\cdot B^\times$ and $\theta^{-1}(B^\times)=H^\times$
  \item[\textbf{(T2)}] If $u\in H$, $b,c\in B$ and $\theta(u)=bc$, then there exist $v,w\in H$ such that $u=vw$, $\theta(v)\sim_B b$ and $\theta(w)\sim_B c$.
\end{itemize} 
Henceforth, we will refer to property \textbf{(T2)} as the \textit{transfer property}. Transfer homomorphisms preserve many arithmetic invariants, most importantly, sets of lengths. We have $\mathsf{L}_H(u)=\mathsf{L}_B(\theta(u))$ for all $u\in H$. If $H$ is a Krull monoid with the additional property that every class in $\mathcal{C}(H)$ contains a prime divisor, then $H$ admits a transfer homomorphism $\boldsymbol{\beta}:H\to\mathcal{B}(\mathcal{C}(H))$. \\

A monoid $H$ is said to be \textit{transfer Krull} if there exists a Krull monoid $B$ and a transfer homomorphism $\theta:H\to B$. Since the identity is a transfer homomorphism, every Krull monoid is transfer Krull, but the converse does not necessarily hold. For example, every half-factorial monoid $H$ is transfer Krull via the transfer homomorphism $\theta:H\to (\mathbb{N}_0,+)$, which maps every element to its unique factorization length, but not every half-factorial monoid is Krull. Thus, transfer homomorphisms are a useful tool to pull back arithmetic properties of a Krull monoid, whose arithmetic is very well understood, to $H$. The following proposition summarizes some properties of transfer Krull monoids.  

    \begin{proposition}\label{tkrull} 
        Let $S$ be a monoid with quotient group $\mathsf{q}(S)$.
       Then $S$ is transfer Krull if and only if there is a Krull monoid $H$ with $S\subseteq H\subseteq \mathsf{q}(S)$, such that the inclusion $S\hookrightarrow H$ is a transfer homomorphism. If this holds, then we have $H=SH^\times$, $H^\times\cap S=S^\times$ and $\mathcal{A}(H)=\{u\varepsilon \ | \ u\in\mathcal{A}(S),\varepsilon\in H^\times\}$.
    \end{proposition}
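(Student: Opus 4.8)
The plan is to prove the equivalence in two directions and then read off the three structural identities from the transfer axioms. The backward implication is immediate: if $H$ is a Krull monoid with $S\subseteq H\subseteq\mathsf q(S)$ and the inclusion $S\hookrightarrow H$ is a transfer homomorphism, then by definition $S$ is transfer Krull, witnessed by the pair $(H,\iota)$. All the content lies in the forward direction, where we are handed \emph{some} Krull monoid $B$ together with a transfer homomorphism $\theta\colon S\to B$, and must manufacture an intermediate Krull monoid $H$ lying between $S$ and its own quotient group.

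The first reduction I would make is to assume that $B$ is reduced. Composing $\theta$ with the canonical map $B\to B_{\mathrm{red}}$ yields a transfer homomorphism $S\to B_{\mathrm{red}}$: checking \textbf{(T1)} and \textbf{(T2)} is routine, since surjectivity onto $B_{\mathrm{red}}$ follows from $B=\theta(S)B^\times$, and in \textbf{(T2)} one absorbs the stray unit into one of the two lifts; moreover $B_{\mathrm{red}}$ is again Krull. So assume $B$ reduced, extend $\theta$ to a homomorphism $\varphi\colon\mathsf q(S)\to\mathsf q(B)$ of quotient groups, and set
\[ H=\varphi^{-1}(B)=\{x\in\mathsf q(S) : \varphi(x)\in B\}. \]
Since $\varphi(s)=\theta(s)\in B$ for $s\in S$, this is a submonoid of $\mathsf q(S)$ with $S\subseteq H\subseteq\mathsf q(S)$, as required.

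The key structural observation is that $\varphi|_H\colon H\to B$ is surjective with kernel exactly $H^\times$. Surjectivity uses that $B$ is reduced, so by \textbf{(T1)} every $b\in B$ equals $\theta(s)=\varphi(s)$ for some $s\in S\subseteq H$; and $\varphi(x)=1$ forces $x^{-1}\in H$, hence $x\in H^\times$, while conversely any unit maps into $B^\times=\{1\}$. Consequently the induced map $H_{\mathrm{red}}\to B$ is a monoid isomorphism, so $H_{\mathrm{red}}$ is Krull, and therefore $H$ is Krull, because a monoid is Krull precisely when its associated reduced monoid is. To see that $\iota\colon S\hookrightarrow H$ is a transfer homomorphism I would transport the properties of $\theta$ along $\varphi$: the identities $H=SH^\times$ and $S\cap H^\times=S^\times$ follow by writing $\varphi(x)=\varphi(s)$ and noting $xs^{-1}\in\ker(\varphi|_H)=H^\times$, together with $\theta^{-1}(B^\times)=S^\times$; and \textbf{(T2)} follows because a factorization $u=bc$ in $H$ maps to $\theta(u)=\varphi(b)\varphi(c)$ in $B$, to which \textbf{(T2)} for $\theta$ applies, and the resulting $B$-associates pull back to $H$-associates since their $\varphi$-images agree.

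For the ``moreover'' clause I would argue for an arbitrary admissible $H$, not just the constructed one. The first two identities $H=SH^\times$ and $H^\times\cap S=S^\times$ are precisely axiom \textbf{(T1)} for the inclusion. For the atom description I would treat both inclusions directly. If $u\in\mathcal A(S)$, then $u$ is a non-unit of $H$ (else $u\in S\cap H^\times=S^\times$), and any factorization $u=bc$ in $H$ lifts via \textbf{(T2)} to $u=vw$ in $S$ with $v\sim_H b$ and $w\sim_H c$; atomicity of $u$ in $S$ makes one of $v,w$ a unit, hence one of $b,c$ a unit, so $u\in\mathcal A(H)$, and the same holds for every associate $u\varepsilon$. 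Conversely, given $a\in\mathcal A(H)$, write $a=s\varepsilon$ with $s\in S$, $\varepsilon\in H^\times$ using $H=SH^\times$; then $s$ is an $H$-associate of the atom $a$, hence a non-unit atom of $H$ lying in $S$, and $S\cap H^\times=S^\times$ forces $s\in\mathcal A(S)$, giving $a=s\varepsilon\in\mathcal A(S)\cdot H^\times$. I expect the only genuine obstacle to be the Krull property of $H$: the rest is bookkeeping with the transfer axioms, whereas concluding that $H$ is Krull rests on the reduction to reduced $B$ and the clean isomorphism $H_{\mathrm{red}}\cong B$, which is exactly what makes $H=\varphi^{-1}(B)$ the right construction.
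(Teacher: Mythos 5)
Your proof is correct. The paper itself offers no argument for this proposition --- its ``proof'' is just a pointer to \cite[Proposition 5.3]{Ge-Zh20a} and \cite[Proposition 2.7]{Ba-Re22a} --- so there is nothing in the paper to compare against step by step; your write-up supplies the missing proof, and it follows exactly the standard route taken in those references: reduce to a reduced Krull monoid $B$, extend $\theta$ to the quotient groups, and pull back $H=\varphi^{-1}(B)$, so that $\varphi|_H$ has kernel $H^\times$, $H_{\mathrm{red}}\cong B$ is Krull, and the transfer axioms for the inclusion transport along $\varphi$. The two facts you use without proof --- that $H$ is Krull if and only if $H_{\mathrm{red}}$ is Krull, and that a homomorphism into a cancellative monoid extends uniquely to the quotient groups --- are standard, and your verification of \textbf{(T1)}, \textbf{(T2)}, and of the three structural identities for an arbitrary admissible $H$ is complete.
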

    \begin{proof}
      See \cite[Proposition 5.3]{Ge-Zh20a} and \cite[Proposition 2.7]{Ba-Re22a}
    \end{proof}

A monoid $H$ is called \textit{finitely primary} if there exist positive integers $\alpha,s$ with the following properties: 
\begin{enumerate}
    \item $H$ is a submonoid of a factorial monoid $F$, generated by $s$ pairwise non-associated prime elements $p_1,\ldots, p_s$. 
    \item $H\setminus H^\times \subseteq p_1\cdots p_sF$. 
    \item $(p_1\cdots p_s)^\alpha F \subseteq H$.
\end{enumerate}

If this is the case, then we say that $H$ is finitely primary of \textit{rank} $s$ and \textit{exponent} $\alpha$. \\

Let $\varepsilon\in F^\times$ and let $x=\varepsilon p_1^{k_1}\cdots p_s^{k_s}\in H$ for non-negative integers $k_i$. For each $i\in [1,s]$, we define the function $\mathsf{v}_{p_i}:H\to\mathbb{N}_0$, where $\mathsf{v}_{p_i}(x)=k_i$.

\begin{lemma}\label{finp}
    If $H\subseteq F$ is finitely primary of rank $s\geq 2$ and exponent $\alpha$, then $|\mathsf{v}_{p_i}(\mathcal{A}(H))|=\infty$ for each prime element $p_i\in F$.
\end{lemma}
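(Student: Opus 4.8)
The plan is to prove the stronger statement that the set $\mathsf{v}_{p_i}(\mathcal{A}(H)) \subseteq \mathbb{N}_0$ is \emph{unbounded}; since an unbounded subset of $\mathbb{N}_0$ is automatically infinite, this yields the claim. By symmetry (relabelling the generating primes of $F$) it suffices to treat $i = 1$. The idea is to feed suitable large elements into $H$ via property (3) and then extract a single atom carrying a large share of the $p_1$-exponent, using property (2) together with $s \geq 2$ to cap the number of atoms in a factorization.

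First I would fix $n \in \mathbb{N}$ and consider
\[ x_n = (p_1 \cdots p_s)^{\alpha}\, p_1^{n}. \]
Since $p_1^n \in F$, property (3) gives $x_n \in (p_1\cdots p_s)^\alpha F \subseteq H$, and $x_n$ is a non-unit because $\mathsf{v}_{p_1}(x_n) = \alpha + n > 0$. Any factorization of $x_n$ into non-units $x_n = u_1 \cdots u_k$ has at most $\alpha$ factors: by property (2) each non-unit lies in $p_1\cdots p_s F$, so $\mathsf{v}_{p_2}(u_j) \geq 1$ for every $j$, and additivity of the valuation $\mathsf{v}_{p_2}$ on the factorial monoid $F$ forces
\[ k \leq \sum_{j=1}^{k} \mathsf{v}_{p_2}(u_j) = \mathsf{v}_{p_2}(x_n) = \alpha. \]
Consequently a factorization of $x_n$ with a maximal number of non-unit factors exists, and in such a factorization each $u_j$ is an atom (otherwise it could be refined, contradicting maximality). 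This is the one place where the hypothesis $s \geq 2$ enters, as it provides a prime $p_2$ distinct from $p_1$ whose exponent caps the number of factors.

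To finish, I would track the $p_1$-exponent. Additivity of $\mathsf{v}_{p_1}$ gives $\sum_{j=1}^k \mathsf{v}_{p_1}(u_j) = \mathsf{v}_{p_1}(x_n) = \alpha + n$, while $k \leq \alpha$. By the pigeonhole principle some atom $u_j$ satisfies
\[ \mathsf{v}_{p_1}(u_j) \geq \frac{\alpha + n}{k} \geq \frac{\alpha + n}{\alpha} = 1 + \frac{n}{\alpha}. \]
Letting $n \to \infty$ produces atoms of arbitrarily large $p_1$-exponent, so $\mathsf{v}_{p_1}(\mathcal{A}(H))$ is unbounded, hence infinite.

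The main point requiring care is not the pigeonhole estimate but the bookkeeping that guarantees a factorization of $x_n$ into atoms actually exists and that its number of atoms is controlled; both are secured simultaneously by the bound $k \leq \alpha$ coming from property (2), which is exactly why having a second prime $p_2$ (i.e. $s \geq 2$) is essential. In particular no appeal to a general atomicity theorem for finitely primary monoids is needed, since the required finiteness of the factorization length is visible directly from the valuations.
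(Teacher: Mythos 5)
Your proof is correct and follows essentially the same route as the paper: the same test element (up to reparameterization), the same bound $\max\mathsf{L}(x)\leq\alpha$ obtained from property (2) applied to a second prime, and the same pigeonhole conclusion. The only difference is that you explicitly justify the existence of a factorization into atoms via maximality of the number of non-unit factors, a point the paper's proof leaves implicit.
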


\begin{proof}
   Let $k$ be a positive integer and consider the element $x=p_i^{\alpha k}(p_1\cdots p_s)^\alpha$. By definition, we have $x\in H$. For every $y\in\mathcal{A}(H)$ and $j\in[1,s]$, we have $\mathsf{v}_{p_j}(y)\geq 1$ and thus $\max\mathsf{L}(x)\leq \alpha$. Hence, in every factorization of $x$, there is an atom $y$ satisfying $\mathsf{v}_{p_i}(y)\geq k+1$. Since $k$ was chosen arbitrarily, we obtain $|\mathsf{v}_{p_i}(\mathcal{A}(H))|=\infty$.
\end{proof}

\subsection[short]{Dedekind domains and orders} Let $R$ be a Dedekind domain with quotient field $K$. Since $R$ is a Krull domain, $R^\bullet$ is a Krull monoid and the class group $\mathcal{C}(R^\bullet)$ coincides with the usual ideal class group $\Cl(R)$. We denote the class of an ideal $I\subseteq R$ by $[I]$ and we write the class group additively, whence $[IJ]=[I]+[J]$ for ideals $I,J$. If every class in $\Cl(R)$ contains a prime ideal, then there is a transfer homomorphism $\boldsymbol{\beta}:R^\bullet\to\mathcal{B}(\Cl(R))$, defined in the following way. If $a\in R^\bullet$ with $aR=P_1^{n_1}\cdots P_k^{n_k}$ for prime ideals $P_1,\ldots,P_k\in\spec(R)$, then $\boldsymbol{\beta}(a)=[P_1]^{n_1}\cdot\ldots\cdot[P_k]^{n_k}$.
It is easy to see that an element $a\in R$ is an atom (equivalently, that $\boldsymbol{\beta}(a)$ is an atom) if and only if $aR=P_1^{n_1}\cdots P_k^{n_k}$ does not contain any proper non-empty subproduct that is principal. Moreover, if $\Cl(R)$ is a torsion group, then $a$ is absolutely irreducible if and only if $aR=P^{\ord([P])}$ for a prime ideal $P$ (for details see \cite[Proposition 7.1.5]{Ge-HK06a}) and for every element $b\in R$, there is $n\in\mathbb{N}$ such that $b^n$ can be written as a product of absolutely irreducible elements.\\ 

We will now gather several facts and properties about orders in Dedekind domains. For references, see for example \cite[2.10]{Ge-HK06a} and \cite[2.11]{HK20a}. \\

An \textit{order} in a Dedekind domain $R$ is a subring $\O\subseteq R$ with quotient field $\mathsf{q}(\O)=\mathsf{q}(R)=K$ such that $R$ is a finitely generated $\O$-module. From now on, we will always assume that $\O$ is a proper subring of $R$. Every order is one-dimensional and Noetherian but not integrally closed. In particular, $\O^\bullet$ cannot be Krull. The \textit{conductor} of $\O$ is the greatest $R$-ideal $\f$ that is contained in $\O$. The conductor is always a nonzero ideal and we have \[\f=\{x\in R \ | \ xR\subseteq\O\}.\] We will denote the set of $R$-prime ideals coprime to $\f$ by $\spec_\f(R)$.\\

Let $\p\in \spec(\O)$. Then $\O_\p$ is a DVR if and only if $\p\not\supseteq \f$. The integral closure $\overline{\O_\p}$ is a semilocal PID with $n$ maximal ideals, where $n$ is the number of $R$-prime ideals lying over $\p$. Moreover, $\O_\p^\bullet$ is a finitely primary monoid of rank $n$. Let $p$ be a prime element of $\overline{\O_\p}$ and let $a\in\overline{\O_\p}$. The element $p$ naturally corresponds to a prime ideal $P\in\spec(R)$.
We denote by $\mathsf{v}_p(a)$ the highest power of $p$ that divides $a$ and if $a\in R$, then $\mathsf{v}_p(a)=\mathsf{v}_P(a)$, where $\mathsf{v}_P(a)$ denotes the highest power of $P$ that divides $aR$. If $a\in \O_\p$ and $x\in (\O_\p^\bullet)_{\text{red}}$ such that $x=a\O_\p^\times$, then we write $\mathsf{v}_p(a)=\mathsf{v}_p(x)$.  If there is only one $R$-prime ideal lying over $\p$, then $\overline{\O_\p}$ is a DVR with precisely one prime element up to associates. In this case we simply write $\mathsf{v}_\p(a)$. \\

An element $a\in\O$ is said to be \textit{regular} if $a+\f\in(\O/\f)^\times$. We denote the (multiplicative) monoid of regular elements of $\O$ by $\Reg(\O)$. Similarly, we define the monoid \[\Reg_\f(R):=\{a\in R \ | \ a+\f\in (R/\f)^\times\}.\] Note that, if $a\in\Reg(\O)$ and $b\in R$, then $ab\in \O$ implies $b\in\O$. Moreover, $\Reg(\O)$ is a divisor-closed submonoid of $\O^\bullet$, meaning that whenever $ab\in\Reg(\O)$ for elements $a,b\in\O^\bullet$, then $a,b\in\Reg(\O)$. 
Similarly, $\Reg_\f(R)$ is a divisor-closed submonoid of $R^\bullet$. We call the extension $\O\subseteq R$ a \textit{root extension} if for every $a\in R$ there is $n\in\mathbb{N}$ such that $a^n\in \O$. If $R/\f$ is a finite ring, then $\O\subseteq R$ is a root extension if and only if the map \[\spec(R)\to\spec(\O)\]\[P\mapsto P\cap\O\] is bijective, for a proof see \cite[Theorem 2]{Ka20a}. We will call this map the Spec-map. More precisely, if $R/\f$ is finite and $a\in R$, then $a^n\not\in\O$ for all $n\in\mathbb{N}$ if and only if there are $P,Q\in\spec(R)$, lying over the same prime ideal $\p$, such that $\mathsf{v}_P(a)\geq 1$ and $\mathsf{v}_Q(a)=0$. In particular, $\Reg(\O)\subseteq \Reg_\f(R)$ is a root extension of monoids.  \\

Let $\Pic(\O)$ denote the Picard group of $\O$. There is an exact sequence \[1\to R^\times/\O^\times\to(R/\f)^\times/(\O/\f)^\times\to\Pic(\O)\to\Cl(R)\to 0,\] which implies that we have $\Pic(\O)\simeq \Cl(R)$ if and only if $ R^\times/\O^\times\simeq(R/\f)^\times/(\O/\f)^\times$. Note that this is also equivalent to $\Reg(\O)\cdot R^\times=\Reg_\f(R)$. Moreover, we have $\O\cdot R^\times=R$ if and only if $\Pic(\O)\simeq \Cl(R)$, the Spec-map is bijective and $1\in\mathsf{v}_\p(\mathcal{A}(\O_\p))$ for all $\p\in\spec(\O)$. Note also that the canonical surjection $\Pic(\O)\to\Cl(R)$ implies that if every class of $\Pic(\O)$ contains a prime ideal, then the same holds true for $\Cl(R)$. In particular, our assumption in Theorem \ref{thm} (2) is stronger than the one made in (1). \\

Let $\p\in\spec(\O)$, let $P_1,\ldots,P_k$ be the $R$-prime ideals lying over $\p$ and let $p_1,\ldots,p_k$ be prime elements of $\overline{\O_\p}$ such that $p_i$ corresponds to $P_i$ for all $i\in [1,k]$. Moreover, let $a\in R$ and $x\in \O_\p$ such that $aR^\times\cap\O\neq\emptyset$ and $\mathsf{v}_{P_i}(a)=\mathsf{v}_{p_i}(x)$ for all $i\in [1,k]$. Then there is $b\in \Reg_\f(R)$ such that $ab\sim_{\O_\p}x$ and the choice of $b$ only depends on its class in $(R/\f)^\times/(\O/\f)^\times$. If we now suppose in addition that $\Pic(\O)\simeq\Cl(R)$, then we can choose $b$ to be a unit and hence we find an element $c\in\O$ such that \[a\sim_R c\sim_{\O_\p} x.\] We will use this fact several times in the final section. \\

Let $H$ be an overmonoid of $R^\bullet$, $a\in R$ and let $I,I_1,I_2$ be principal ideals of $R$. We write $a\sim \O$ if there is $b\in\O$ such that $a\sim_R b$, and $I\sim\O$ if there is $b\in \O$ with $bR=I$. Moreover, we write $I_1\sim_H I_2$ if for all $a_1,a_2\in R$ with $(a_1)=I_1,(a_2)=I_2$, we have $a_1\sim_H a_2$.   \\

\begin{center}
    \textit{Throughout this paper, $R$ is always a Dedekind domain with torsion class group and $\O\subseteq R$ is an order with conductor $\f$ such that $R/\f$ is finite and such that every class in $\Cl(R)$ contains a prime ideal, coprime to $\f$. }
\end{center} 

\smallskip

Using Proposition \ref{tkrull}, we can deduce the following basic facts about transfer Krull orders, which we will use freely from now on.

   \begin{lemma}\label{basic}
    Suppose that $\O$ is transfer Krull with Krull monoid $H\supseteq \O^\bullet$ as in Proposition \ref{tkrull}. Then \\
    
   \noindent (i) $R^\bullet\subseteq H$. \\

   \noindent (ii) $\mathcal{A}(\Reg(\O))\subseteq \mathcal{A}(R)$. \\

  \noindent  (iii) If $\O\subseteq R$ is a root extension, then $\mathcal{A}(\O)\subseteq \mathcal{A}(R)$.

   \end{lemma}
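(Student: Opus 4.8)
The plan is to handle the three assertions in increasing difficulty, with (ii) and (iii) reduced to a single common mechanism.

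For (i), I would argue through the complete integral closure. Since $H$ is Krull it is completely integrally closed, so $\widehat{H}=H$, and because $\O^\bullet\subseteq H\subseteq\mathsf{q}(\O^\bullet)$ the complete integral closure is monotone and gives $\widehat{\O^\bullet}\subseteq\widehat{H}=H$. It therefore suffices to show $R^\bullet\subseteq\widehat{\O^\bullet}$, and this is exactly where the conductor enters: fixing a nonzero $f\in\f$, we have $f\in\O^\bullet$ and $fR\subseteq\O$ by the very definition of $\f$, so $fr^n\in fR\subseteq\O$ for every $r\in R^\bullet$ and every $n\in\mathbb{N}$. Thus $f$ witnesses that $r$ is almost integral over $\O^\bullet$, i.e. $r\in\widehat{\O^\bullet}$, and (i) follows.

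For (ii) and (iii) I would first reduce both statements to one claim about factorizations in $R$. If $u$ is an atom of $\Reg(\O)$, then, since $\Reg(\O)$ is divisor-closed in $\O^\bullet$ with unit group $\O^\times$, $u$ is already an atom of $\O^\bullet$; the same is trivially true for an atom of $\O$ in (iii). By Proposition \ref{tkrull} we have $\mathcal{A}(\O^\bullet)\subseteq\mathcal{A}(H)$, so in both cases $u\in\mathcal{A}(H)$. Next I would record that $u$ is a non-unit of $R$: otherwise $u\in\O^\bullet\subseteq H$ and $u^{-1}\in R^\bullet\subseteq H$ by (i), forcing $u\in H^\times\cap\O^\bullet=\O^\times$ and contradicting $u\in\mathcal{A}(\O^\bullet)$. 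Now suppose $u=bc$ with $b,c\in R^\bullet$ non-units; by (i) this is a factorization in $H$, and since $u$ is an atom of $H$, one factor, say $b$, lies in $H^\times$.

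The heart of the matter is the claim that such a $b$ must already be a unit of $R$, which contradicts the choice of $b$ and hence proves $u\in\mathcal{A}(R)$. The key point is to push a power of $b$ into $\O$. In case (iii) this is immediate from the root extension hypothesis. In case (ii), $u\in\Reg(\O)\subseteq\Reg_\f(R)$, so $uR$ and hence $bR$ are coprime to $\f$; every prime $P$ with $\mathsf{v}_P(b)\ge 1$ is then coprime to $\f$, whence $\O_{P\cap\O}$ is a DVR and $P$ is the unique $R$-prime over $P\cap\O$. The criterion recalled from \cite{Ka20a} (that $b^n\notin\O$ for all $n$ requires two $R$-primes over a common $\p\in\spec(\O)$ separating the support of $b$) then cannot apply, so $b^k\in\O$ for some $k$. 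In either case $b^k\in\O^\bullet\cap H^\times=\O^\times\subseteq R^\times$, so $b\in R^\times$, the desired contradiction.

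I expect the main obstacle to be precisely this last step: one has no direct control over the possibly large unit group $H^\times$, and the only leverage is the identity $\O^\bullet\cap H^\times=\O^\times$ from Proposition \ref{tkrull}. Everything therefore hinges on driving a suitable power of the $H$-unit $b$ back into $\O$, which is exactly what coprimality to the conductor (for (ii)) and the root extension hypothesis (for (iii)) are designed to supply; once a power lands in $\O$, the intersection identity converts ``unit in $H$'' into ``unit in $R$'' and closes the argument.
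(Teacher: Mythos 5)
Your proposal is correct and follows essentially the same route as the paper: (i) via complete integral closure (you just make explicit the conductor argument showing $R^\bullet\subseteq\widehat{\O^\bullet}$, which the paper takes as known), and (ii)/(iii) by passing an $\O$-atom to an $H$-atom, noting a proper $R$-factor would be an $H$-unit, pushing a power of it into $\O$ (via coprimality to $\f$, i.e.\ the root extension $\Reg(\O)\subseteq\Reg_\f(R)$, resp.\ the root extension hypothesis), and concluding from $H^\times\cap\O^\bullet=\O^\times$. The only cosmetic difference is that you re-derive the root-extension property of $\Reg(\O)\subseteq\Reg_\f(R)$ from the criterion of Kainrath rather than citing it directly, as the paper does.
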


   \begin{proof}
  (i) Since $H$ is a completely integrally closed overmonoid of $H$, we obtain $\widehat{\O^\bullet}=R^\bullet \subseteq H$. \\

 \noindent (ii) Let $a\in \mathcal{A}(\Reg(\O))$ and suppose that we can write $a=bc$ for non-units $b,c\in \Reg_\f(R)$. Since $a$ is an $\O$-atom, it is an $H$-atom by Proposition \ref{tkrull}. By (i), we have $b,c\in H$ and thus $b\in H^\times$ or $c\in H^\times$. Suppose w.l.o.g. that $b\in H^\times$ and let $n\in\mathbb{N}$ such that $b^n\in \O$. Then $b^n\in H^\times\cap \O=\O^\times$, a contradiction. \\

 \noindent (iii) See the proof of (ii).
   \end{proof}

\section{Regular elements of transfer Krull orders}
\noindent The aim of this section is to prove that $\Pic(\O)\simeq\Cl(R)$ for a transfer Krull order $\O$, provided that $|\Cl(R)|\neq 2$. This establishes a strong connection between the regular elements of $\O$ and the monoid $\Reg_\f(R)$.  \\
\begin{center}
    \textit{Throughout this section, we suppose in addition to our initial assumptions that $\O$ is transfer Krull with Krull monoid $H\supseteq \O^\bullet $ as in Proposition \ref{tkrull}.} 
\end{center}

\begin{lemma}\label{abs}
    Let $a\in \Reg_\f(R)$ be absolutely irreducible in $R$. Then $a\sim \O$.
\end{lemma}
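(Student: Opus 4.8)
The plan is to transport a factorization of a suitable power of $a$ from $\Reg(\O)$ into $R$ and then exploit the unique factorization of that power guaranteed by absolute irreducibility. I would use only two inputs from the preliminaries: the root extension of monoids $\Reg(\O)\subseteq\Reg_\f(R)$, and Lemma \ref{basic}(ii). The transfer Krull hypothesis is never used directly, only through the latter.

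First step: since $a\in\Reg_\f(R)$ and $\Reg(\O)\subseteq\Reg_\f(R)$ is a root extension of monoids, there is $m\in\N$ with $a^m\in\Reg(\O)\subseteq\O$. As $\O$ is Noetherian, $\O^\bullet$ is atomic, and $\Reg(\O)$, being a divisor-closed submonoid of $\O^\bullet$, is atomic as well: a non-unit of $\Reg(\O)$ is a non-unit of $\O^\bullet$, and its $\O^\bullet$-atom factors lie back in $\Reg(\O)$ by divisor-closedness. Hence I can write $a^m=w_1\cdots w_k$ with $w_1,\dots,w_k\in\mathcal A(\Reg(\O))$.

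Second step: by Lemma \ref{basic}(ii) we have $\mathcal A(\Reg(\O))\subseteq\mathcal A(R)$, so $a^m=w_1\cdots w_k$ is a factorization of $a^m$ into atoms of $R$, with every $w_i$ lying in $\O$. Absolute irreducibility of $a$ says precisely that $a^m$ has, up to associates, only the factorization consisting of $m$ copies of $a$. Comparing the two factorizations forces $k=m$ and, after reordering, $w_i\sim_R a$ for each $i$. In particular $w_1\in\O$ with $w_1\sim_R a$, which is exactly the assertion $a\sim\O$.

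I expect the argument to be short, the only points requiring care being the atomicity of $\Reg(\O)$ and the fact that its atoms remain atoms of $R$. The latter is the content of Lemma \ref{basic}(ii), and it is the crucial place where the transfer Krull assumption enters: without it a regular atom of $\O$ could split further in $R$, which would destroy the comparison with the unique $R$-factorization of $a^m$ and make the conclusion fail. No finer structure of the Krull monoid $H$ (or of the localizations $\O_\p$) is needed at this stage.
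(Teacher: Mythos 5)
Your proof is correct, but it takes a genuinely different route from the paper. The paper works directly inside the Krull overmonoid $H$: it applies the transfer property \textbf{(T2)} to the $H$-factorization $a^n=a\cdot a^{n-1}$ (where $a^n\in\O$ by the root extension), obtaining $b\in\O$ dividing $a^n$ in $\O$ with $a\sim_H b$; absolute irreducibility then forces $b\sim_R a^k$ for some $k\le n$, and the case $k\ge 2$ is excluded because it would give $a\in H^\times$ and hence $a^n\in H^\times\cap\O=\O^\times$. You, by contrast, never touch $H$ or \textbf{(T2)}: you factor $a^m\in\Reg(\O)$ into atoms of $\Reg(\O)$ (using that $\O$ is Noetherian, hence atomic, and that $\Reg(\O)$ is divisor-closed in $\O^\bullet$), promote these to $R$-atoms via Lemma~\ref{basic}(ii), and compare with the unique $R$-factorization of $a^m$ guaranteed by absolute irreducibility. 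Your version is more modular: the entire transfer Krull input is channeled through Lemma~\ref{basic}(ii), so the argument is insensitive to the particular Krull monoid $H$ and to the transfer property itself; the small price is invoking atomicity of $\Reg(\O)$, which the paper's direct application of \textbf{(T2)} avoids. Both arguments share the same two pillars, namely the root extension $\Reg(\O)\subseteq\Reg_\f(R)$ giving a power of $a$ inside $\O$, and the uniqueness of factorizations of powers of an absolutely irreducible element.
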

\begin{proof}
    Let $n\geq 2$ such that $a^n\in \O$. By the transfer property, there is $b\in \O$ such that $b$ divides $a^n$ in $\O$ and $a\sim_H b$. Since $a$ is absolutely irreducible in $R$, we have $b\sim_R a^k$ for some $k\leq n$ and consequently $a\sim_H a^k$. If $k\neq 1$, then $a\in H^\times$ and consequently $a^n\in H^\times\cap\O=\O^\times$, a contradiction. Hence $k=1$ and $a\sim \O$.
\end{proof}

Before continuing, we recall a simple group-theoretical fact. If $G$ is an (additive) abelian torsion group, $|G|\neq 2$ and $g\in G$ with $\ord(g)=2$, then there is $h\neq g$ such that $\ord(h)=2$ or $\ord(h)=2k$ for a positive integer $k\geq 2$ and $kh=g$. 

\begin{lemma}\label{regp}
   Suppose that $|\Cl(R)|\neq 2$ and let $P_1,P_2,P_3\in \spec_\f(R)$. \\

   \label{regp:inv} (i) If $[P_1]=-[P_2]$, then $P_1P_2\sim \O$. \\

  \label{regp:ord2}  (ii) If $\ord([P_1])=\ord([P_2])=2$, $[P_1]\neq [P_2]$ and $[P_3]=[P_1P_2]$, then $P_1P_2P_3\sim \O$. \\

   \label{regp:even} (iii) If $\ord([P_1])=2, \ord([P_2])=2k$ for some $k\geq 2$ such that $k[P_2]=[P_1]$, then $P_1P_2^k\sim \O$. \\

\end{lemma}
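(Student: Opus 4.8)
The plan is to treat the three parts uniformly by reducing each to a single extraction principle. In every case the relevant product of primes is a \emph{principal} $R$-ideal: writing $I$ for $P_1P_2$ in (i), for $P_1P_2P_3$ in (ii), and for $P_1P_2^k$ in (iii), a one-line class-group computation gives $[I]=0$. Indeed $[P_1P_2]=[P_1]+[P_2]=0$ by hypothesis; $[P_1P_2P_3]=2[P_1]+2[P_2]=0$ since $\ord([P_1])=\ord([P_2])=2$; and $[P_1P_2^k]=[P_1]+k[P_2]=2[P_1]=0$ using $k[P_2]=[P_1]$ and $\ord([P_1])=2$. Fix $a\in R$ with $aR=I$; as all the $P_i$ are coprime to $\f$ we have $a\in\Reg_\f(R)$. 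The goal $I\sim\O$ amounts to producing an element of $\O$ whose $R$-ideal is exactly $I$.

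First I would raise $I$ to a power that decomposes into absolutely irreducible prime powers. Because $\Cl(R)$ is torsion, each $d_i:=\ord([P_i])$ is finite and $P_i^{d_i}$ is principal and absolutely irreducible. Choosing $m\in\N$ with $d_i\mid m\,\mathsf v_{P_i}(a)$ for all $i$ (one may take $m=\ord([P_1])$ in (i) and $m=2$ in (ii) and (iii)), the ideal $I^m$ becomes a product of ideals of the form $P_i^{d_i}$. By Lemma \ref{abs} each such $P_i^{d_i}$ is $\sim\O$, so there are $c_i\in\O$ with $c_iR=P_i^{d_i}$, and a suitable product $u\in\O$ of the $c_i$ satisfies $uR=I^m$. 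In particular $u\sim_R a^m$, hence $u\sim_H a^m$.

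The crux, and the step I expect to be the main obstacle, is to control the units of $H$ at the primes dividing $I$. Here I would exploit that $H$ is a Krull monoid with $R^\bullet\subseteq H\subseteq\mathsf q(H)=K^\times$: its essential (discrete, normalized) valuations are nonnegative on $R^\bullet$, so their valuation rings are overrings of the Dedekind domain $R$ and therefore localizations at height-one primes. Thus $H=\{x\in K^\times\mid \mathsf v_P(x)\ge 0\text{ for all }P\in\Omega\}$ for some $\Omega\subseteq\spec(R)$, and $H^\times=\{x\in K^\times\mid \mathsf v_P(x)=0\text{ for all }P\in\Omega\}$. I then claim $\spec_\f(R)\subseteq\Omega$: for $P\in\spec_\f(R)$ the element $c_P\in\O$ with $c_PR=P^{\ord([P])}$ (again from Lemma \ref{abs}) is a non-unit of $\O$ supported only at $P$, so if $P\notin\Omega$ then $\mathsf v_Q(c_P)=0$ for all $Q\in\Omega$, giving $c_P\in H^\times\cap\O^\bullet=\O^\times$ by Proposition \ref{tkrull} --- a contradiction. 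Consequently every $\delta\in H^\times$ satisfies $\mathsf v_{P_i}(\delta)=0$ for each prime $P_i$ occurring in $I$.

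With this in hand the extraction is immediate, and I would run it verbatim in all three parts. Writing $u=\varepsilon a^m$ with $\varepsilon\in H^\times$ and factoring $u=(\varepsilon a)\,a^{m-1}$ in $H$, the transfer property \textbf{(T2)} yields $v,w\in\O^\bullet$ with $u=vw$ and $v\sim_H a$. Writing $v=\delta a$ with $\delta\in H^\times$ and using $\mathsf v_{P_i}(\delta)=0$ gives $\mathsf v_{P_i}(v)=\mathsf v_{P_i}(a)$ for every $i$. Since $v$ divides $u$ in $\O\subseteq R$, the ideal $vR$ is supported only on the $P_i$; hence $vR=\prod_i P_i^{\mathsf v_{P_i}(a)}=I$ with $v\in\O$, which is exactly $I\sim\O$. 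The only case-specific inputs are the verification $[I]=0$ and the choice of the exponent $m$ recorded above.
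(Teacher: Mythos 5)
Your opening and closing steps are sound: the class computations giving $[I]=0$, the use of Lemma \ref{abs} to produce $u\in\O$ with $uR=I^m$, and the application of \textbf{(T2)} to extract $v\in\O^\bullet$ with $v\sim_H a$ are all correct. But the proof stands or falls with your ``crux'' step, and that step is wrong. A Krull monoid $H$ with $R^\bullet\subseteq H\subseteq K^\times$ need \emph{not} be of the form $\{x\in K^\times \mid \mathsf v_P(x)\ge 0 \text{ for all } P\in\Omega\}$ for some $\Omega\subseteq\spec(R)$. The essential valuations of $H$ are monoid homomorphisms on $\mathsf q(H)=K^\times$; nonnegativity on $R^\bullet$ does not make them ring valuations, because they need not satisfy the ultrametric inequality with respect to addition, so ``their valuation rings'' need not be rings at all, and the reduction to overrings of $R$ collapses. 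Concretely, fix distinct $P_1,P_2\in\spec_\f(R)$ with $[P_1]=[P_2]$ and set
\[ H \;=\; \bigl\{\,y\in K^\times \mid \mathsf v_{P_1}(y)+\mathsf v_{P_2}(y)\ge 0, \text{ and } \mathsf v_Q(y)\ge 0 \text{ for all } Q\in\spec(R)\setminus\{P_1,P_2\}\,\bigr\}. \]
Then $R^\bullet\subseteq H$ and $H$ is Krull (its reduced monoid embeds as a saturated submonoid of a free abelian monoid via $y\mapsto ((\mathsf v_Q(y))_Q,\ \mathsf v_{P_1}(y)+\mathsf v_{P_2}(y))$), yet $H$ is not an intersection of localizations of $R$: any $\delta\in K^\times$ with $\delta R=P_1P_2^{-1}$ (such $\delta$ exists since $[P_1]=[P_2]$) lies in $H^\times$ and has $\mathsf v_{P_1}(\delta)=1$.

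This is not merely a gap in justification; the intermediate claim you need --- $\mathsf v_P(\delta)=0$ for all $\delta\in H^\times$ and $P\in\spec_\f(R)$ --- is genuinely false for legitimate choices of $H$ in Proposition \ref{tkrull}. Indeed, whenever the inclusion $\O^\bullet\hookrightarrow R^\bullet$ is itself a transfer homomorphism (which by the paper's main theorem is exactly what happens for transfer Krull orders), one checks that the inclusion $\O^\bullet\hookrightarrow H$ into the monoid $H$ above is also a transfer homomorphism: \textbf{(T1)} follows from $\O\cdot R^\times=R$ together with $H^\times\cap R^\bullet=R^\times$, and \textbf{(T2)} follows by shifting a factorization $u=bc$ in $H$ by a suitable power $\delta^{j}$ so that $b\delta^j,c\delta^{-j}\in R^\bullet$ and then invoking \textbf{(T2)} for $\O^\bullet\hookrightarrow R^\bullet$. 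So units of $H$ can move valuation between distinct primes in the same ideal class, and the relation $a\sim_H b$ can never pin down more than the class data; this is precisely why the paper's Lemmas \ref{betaabs} and \ref{beta} assert only $\boldsymbol{\beta}(a)=\boldsymbol{\beta}(b)$, not $aR=bR$, and why they are proved \emph{after} Lemma \ref{regp} rather than used to prove it. The paper's actual proof of Lemma \ref{regp} avoids any valuation-theoretic description of $H^\times$ and instead compares sets of lengths: it uses $\max\mathsf L_H(a^n)\ge n$ to force an $\O$-factorization that can only exist if $P_1P_2\sim\O$, plays off atoms of $\O$ against atoms of $H$ and of $R$, and has to treat the case $\ord([P_1])=2$ of (i) last, deducing it from (ii) and (iii). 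As a sanity check: if your claim held, Lemmas \ref{betaabs}, \ref{beta} and Proposition \ref{pic} would be one-line corollaries and most of Section 3 would be superfluous; the fact that the paper takes the long route is a strong hint that no such shortcut is available.
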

\begin{proof}

    (i) We will first prove the assertion for all $P_1\in \spec_\f(R)$ for which $n:=\ord([P_1])\neq 2$. If $n=1$, the claim follows immediately from Lemma \ref{abs}. Hence we can assume that $n\geq 3$. Let $a\in \Reg_\f(R)$ such that $aR=P_1P_2$. Since $a^n$ is a product of absolutely irreducibles in $R$, we obtain $a^n\sim\O$ by Lemma \ref{abs} and we find $b\in\O$ with $a^n\sim_R b$. Consequently, the transfer property yields $\mathsf{L}_H(a^n)=\mathsf{L}_\O(b)$. Our claim is now proved by using the fact that $\max\mathsf{L}_H(a^n)\geq n$ and by observing that, since neither $P_1^2$ nor $P_2^2$ are principal, $b$ has an $\O$-factorization of length $n$ only if $P_1P_2\sim \O$. \\

    Before continuing with the case $n=2$, we will first prove the other two statements. \\

  \noindent (ii) Let $a\in \Reg_\f(R)$ such that $aR=P_1P_2P_3$. Since $a^2$ is a product of absolutely irreducibles, Lemma \ref{abs} yields $a^2\sim \O$ and we find $b\in \O$ such that $b\sim_R a^2$. By the transfer property, we obtain $c,d\in \O\setminus\O^\times$ such that $cd=b$ and $c\sim_H a\sim_H d$. If $P_1P_2P_3 \not\sim \O$, then w.l.o.g., $cR=P_{\sigma(1)}^2$ and $dR=P_{\sigma(2)}^2P_{\sigma(3)}^2$, where $\sigma:[1,3]\to [1,3]$ is a permutation. But then $c\in \mathcal{A}(\O)\subseteq \mathcal{A}(H)$ and $d\not\in\mathcal{A}(H)$, a contradiction. \\

   \noindent (iii) Let $P_3\in \spec_\f(R)$ with the property that $[P_2]=-[P_3]$, let $a,b\in \Reg_\f(R)$ such that $aR=P_1P_2^k$ and $bR=P_1P_3^k$ and suppose that $a\not \sim \O$. We claim that also $b\not \sim \O$. Otherwise, we may assume that $b\in \O$ and observe that $P_1^2\sim\O$ by Lemma \ref{abs} and, since $\ord([P_2])>2$, that $P_2P_3\sim\O$ by (i).
   Hence \[abR=P_1^2P_2^kP_3^k\sim\O\] and, after possibly replacing $a$ with an $R$-associate, we obtain $ab\in \O$. However, since $b\in \Reg(\O)$, this yields $a\in \O$, a contradiction. Hence $P_1P_2^k\not\sim \O$ and $P_1P_3^k\not\sim \O$.\\
   
   Since $a^2$ is a product of absolutely irreducibles in $R$, Lemma \ref{abs} yields an element $c\in \O$ such that $a^2\sim_R c$. By the transfer property, there are $d,e\in \O$ such that $de=c$ and $d\sim_H a\sim_H e$. Clearly, either $d$ or $e$ must generate the principal ideal $P_1^2$ and we obtain \[P_1^2\sim_H P_1P_2^k.\] Repeating the same argument for $P_3$ in place of $P_2$, we obtain \[P_1^2\sim_H P_1P_3^k\] and consequently \[P_1^4\sim_H P_1^2P_2^k P_3^k,\] which implies that \[P_1^2\sim_H P_2^kP_3^k.\] As we already proved, we have $P_1^2\sim \O$ and $P_2^kP_3^k\sim\O$. However, elements in $\O$ generating $P_1^2$ are atoms, while those that generate $P_2^kP_3^k$ are not atoms in $H$, a contradiction. \\
   
   We will now prove the remaining case in (i). Suppose that $\ord([P_1])=\ord([P_2])=2$ and let $a\in\Reg_\f(R)$ such that $aR=P_1P_2$. Since $|\Cl(R)|\neq 2$, there is $P_3\in\spec_\f(R)$ with the property that $\ord([P_3])=2$ and $[P_1]\neq[P_3]$ or such that $\ord([P_3])=2k$ for some $k\geq 2$ and $k[P_3]=[P_1]$. \\
   
   In the first case, let $P_4 \in\spec_\f(R)$ such that $[P_4]=[P_1P_3]$. By (ii), we have \[P_1P_3P_4\sim\O,\] \[P_2P_3P_4\sim\O\] and consequently \[P_1P_2P_3^2P_4^2\sim\O.\] By Lemma \ref{abs}, we find $b,c\in \Reg(\O)$ such that $bR=P_3^2$ and $cR=P_4^2$. After possibly replacing $a$ with an $R$-associate, we have $abc\in \O$ and since $bc\in \Reg(\O)$, we obtain $a\in \O$ and consequently $P_1P_2\sim \O$.  \\

   In the second case, we have $P_1P_3^k\sim\O$ and $P_2P_3^k\sim\O$ by (iii) and consequently \[P_1P_2P_3^{2k}\sim\O.\] By noting that $P_3^{2k}\sim \O$, we can proceed in the same manner as in the previous case to obtain $P_1P_2\sim\O$.

\end{proof}

\begin{lemma} \label{betaabs}
    Suppose that $|\Cl(R)|\neq 2$ and let $a,b\in \Reg_\f(R)$ such that $a\sim_H b$. If $a$ is absolutely irreducible in $R$, then $\boldsymbol{\beta}(a)=\boldsymbol{\beta}(b)$. 
\end{lemma}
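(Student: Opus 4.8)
The plan is to show that $\boldsymbol{\beta}(b)$ equals the monochromatic sequence $[P]^n$, where by hypothesis $aR=P^n$ with $g:=[P]$ of order $n$, so that $\boldsymbol{\beta}(a)=g^n$. First I would reduce to a convenient form: by Lemma \ref{abs} we may replace $a$ by an $R$-associate lying in $\Reg(\O)$ (this changes neither $\boldsymbol{\beta}(a)$ nor the relation $a\sim_H b$), and then $a$ is an $\O$-atom, hence an $H$-atom. Consequently $b$ is an $H$-atom; using that $\Reg(\O)\subseteq\Reg_\f(R)$ is a root extension together with $H^\times\cap\O^\bullet=\O^\times$, a short argument rules out any nontrivial $R$-factorization of $b$ (a putative factor that becomes an $H$-unit would, after raising to a power landing in $\O$, be an $\O^\times$, hence an $R^\times$). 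Thus $b\in\mathcal{A}(R)$ and $S:=\boldsymbol{\beta}(b)$ is an atom of $\mathcal{B}(\Cl(R))$. Since an atom of $\mathcal{B}(\Cl(R))$ with support $\{g\}$ is automatically $g^{\ord(g)}=g^n$, it suffices to prove that $\supp(S)=\{g\}$.

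Next I would build a monochromaticity engine from the absolute irreducibility of $a$. Because $a^k\in\Reg(\O)$ and every $\O$-atom dividing it is a regular $R$-atom (Lemma \ref{basic}(ii)), each $\O$-factorization of $a^k$ is an $R$-factorization; as $a$ is absolutely irreducible, that $R$-factorization is unique and consists of $k$ factors each $\sim_R a$. Hence $a$ is absolutely irreducible in $H$, and so is $b$, since $a\sim_H b$. In particular, for any $N$ with $b^N\in\O$ the factorizations of $b^N$ in $\O$ consist of $N$ $\O$-atoms, each $H$-associated to $a$, whose $\boldsymbol{\beta}$-images multiply to $S^N$ in $\mathcal{F}(\Cl(R))$; and the same device applied to $a^N$ shows that every $\O$-atom dividing a power of $a$ has $\boldsymbol{\beta}$-image exactly $g^n$.

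The decisive and hardest step is to transport this information across $\sim_H$, because $H$-associated regular elements need not have equal $\boldsymbol{\beta}$-image (the units of $H$ may have nonzero valuations at primes coprime to $\f$); this is exactly where Lemma \ref{regp} and the hypothesis $|\Cl(R)|\ne2$ enter. Assuming for contradiction that $\supp(S)$ contains a class $h\ne g$, I would write $b^NR=\prod_Q Q^{e_Q}$ with all $Q\in\spec_\f(R)$ and, using that every class contains a prime coprime to $\f$, choose for each $Q$ a prime $Q'$ with $[Q']=-[Q]$, setting $cR=\prod_Q (Q')^{e_Q}$ for some $c\in\Reg(\O)$. By Lemma \ref{regp}(i) each pair $QQ'$ is $\sim\O$, while the order-two classes are accommodated by parts (ii) and (iii); pairing the primes then exhibits an $\O$-factorization of $b^Nc$ of length $\sum_Q e_Q=N\,|S|$. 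Since $a^Nc\sim_H b^Nc$, this number is also an $\O$-length of $a^Nc$, whose image is $g^{nN}\overline{S}^{\,N}$, and matching the $\O$-realizable factorization lengths of $a^Nc$ against $N\,|S|$ — using that the block $g^{nN}$ inherited from the absolutely irreducible $a$ is rigid, its only $\O$-realizable splittings being into copies of $a$ — forces $\supp(S)=\{g\}$ and hence $S=g^n=\boldsymbol{\beta}(a)$. I expect the main obstacle to be precisely this comparison: one must choose the complementary element $c$ and invoke the correct case of Lemma \ref{regp} so that the recombined factorizations genuinely live in $\O$ rather than merely in $R$, and then extract from the equality of the $\O$-length sets a numerical contradiction unless $S$ is monochromatic of the correct size.
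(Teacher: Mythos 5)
Your reductions at the start are correct (they essentially re-derive Lemma \ref{basic}): after replacing $a$ by an $R$-associate lying in $\Reg(\O)$ via Lemma \ref{abs}, both $a$ and $b$ are $H$-atoms, $b\in\mathcal{A}(R)$, hence $S=\boldsymbol{\beta}(b)$ is a minimal zero-sum sequence and it suffices to show $\supp(S)=\{g\}$. The genuine gap is in your decisive step, and it is not a missing detail but a strategy that cannot work. First, the rigidity claim is false: in an $\O$-factorization of $a^Nc$, atoms may mix $P$ with the auxiliary primes. Indeed, if $Q'\in\spec_\f(R)$ with $[Q']=-[P]$ divides $cR$, then $PQ'$ is principal and $PQ'\sim\O$ by Lemma \ref{regp}(i); since all elements involved are regular, an $\O$-atom generating $PQ'$ really does divide $a^Nc$ in $\O$. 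So the block $P^{nN}$ need not split into copies of $a$. Second, the length comparison provably cannot detect the difference between $S$ and $g^n$. Take $g=[P]$ of order $n=3$ and suppose $S=g\cdot(-g)$, i.e.\ $bR=Q_1Q_2$ with $[Q_1]=g$, $[Q_2]=-g$. Your construction gives $cR=(Q_1')^N(Q_2')^N$ with $[Q_i']=-[Q_i]$, and $N|S|=2N\in\mathsf{L}_\O(b^Nc)$ via the pairs $Q_iQ_i'$. But $a^NcR=P^{3N}(Q_1')^N(Q_2')^N$ also admits an $\O$-factorization of length exactly $2N$: take $N$ atoms generating $Q_1'Q_2'$ (inverse classes, Lemma \ref{regp}(i)) and $N$ atoms generating $P^3$ (Lemma \ref{abs}). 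Thus $\mathsf{L}_\O(a^Nc)$ and $\mathsf{L}_\O(b^Nc)$ both contain $N|S|$, no contradiction arises, and yet $S\neq g^3$.

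The information needed is finer than a comparison of lengths against one auxiliary element, and this is where the paper's proof goes a different way. It first proves the statement when $b$ is itself absolutely irreducible, by a case analysis on $aR=P_1^n$, $bR=P_2^m$ with carefully chosen auxiliary primes; this is precisely where the hypothesis $|\Cl(R)|\neq 2$ does its work (producing $P_3$ with $\ord([P_3])=2$, $[P_3]\neq[P_1]$, or $\ord([P_3])=2k$ with $k[P_3]=[P_1]$, and invoking Lemma \ref{regp}(ii),(iii)). Then, in the general case, it writes $b^m$ as a product of absolutely irreducible elements of $\O$ and applies the transfer property to $a^m\sim_H b^m$ to extract an $\O$-atom $c$ dividing $a^m$ with $c\sim_H d$ for some absolutely irreducible $d$; absolute irreducibility of $a$ forces $c\sim_R a$, reducing to the first case, and finally $n=\sum n_i$ follows since $b\in\mathcal{A}(R)$. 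Note that your sketch never uses $|\Cl(R)|\neq 2$ except by quoting Lemma \ref{regp}, which is a further symptom that the core content of the lemma — the comparison of two absolutely irreducible elements across $\sim_H$ — is missing from your argument.
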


\begin{proof}
   First, we will assume that $b$ is absolutely irreducible as well. Let $aR=P_1^n, bR=P_2^m$, let $Q_1\in\spec_\f(R)$ such that $[P_1]=-[Q_1]$ and suppose that $n>m$. By Lemmas \ref{abs} and \ref{regp}, there are $c,d\in \O$ such that $cR=P_1Q_1$ and $dR=P_2^mQ_1^n$. By assumption, we have $c^n\sim_H d$ and thus $\mathsf{L}_\O(c^n)=\mathsf{L}_\O(d)$. Since $n>m\geq 1$, we have $c\in\mathcal{A}(\O)$ and thus $n\in\mathsf{L}_\O(c^n)$. It is then easy to see that $n\in\mathsf{L}_\O(d)$ is only possible if $P_2$ is principal and $n=2$. Thus we will suppose that $n=2$ and $m=1$.\\

Since $|\Cl(R)|\neq 2$, there is $P_3\in\spec_\f(R)$ with the property that $\ord([P_3])=2$ and $[P_1]\neq[P_3]$ or such that $\ord([P_3])=2k$ for some $k\geq 2$ and $k[P_3]=[P_1]$. \\

In the first case, let $P_4 \in\spec_\f(R)$ such that $[P_4]=[P_1P_3]$. By Lemmas \ref{abs} and \ref{regp}, there are $c,d\in \O$ such that $cR=P_1P_3P_4$ and $dR=P_2P_3^2P_4^2$. By assumption, we have $c^2\sim_H d$, since $P_1^2\sim_H P_2$. Moreover, we have $c\in\mathcal{A}(\O)$ and thus $2\in\mathsf{L}_\O(c^2)$. However, any $\O$-factorization of $d$ of length two yields an $\O$-atom, that is not an $R$-atom, contradicting Lemma \ref{basic}. \\

In the second case, let $P_4 \in\spec_\f(R)$ such that $[P_3]=-[P_4]$. By Lemmas \ref{abs} and \ref{regp}, we find $c,d,e\in \O$ such that $cR=P_1P_3^k$, $dR= P_1P_4^k$ and $eR=P_2^2P_3^{k}P_4^k$. We then have $cd\sim_H e$ and $2\in \mathsf{L}_\O(cd)$, since $c,d\in\mathcal{A}(\O)$. Recalling that $k\geq 2$, it is easy to see that $2\in\mathsf{L}_\O(e)$ implies the existence of an $\O$-atom that is not an $R$-atom, contradicting Lemma \ref{basic} again. \\

Hence we have shown that $n=m$. We will now prove that $[P_1]=[P_2]$. This is clearly true if $n=1$. Suppose now that $n\geq 2$ and let $Q_1\in\spec_\f(R)$ such that $[P_1]=-[Q_1]$. Again, by Lemmas \ref{abs} and \ref{regp}, we find $c,d\in\O$ such that $cR=P_1Q_1$ and $dR=P_2^nQ_1^n$. Then $c^n\sim_H d$ and $n\in\mathsf{L}_\O(c^n)$. If $[P_1]\neq[P_2]$, then $P_2Q_1$ is not principal, which clearly implies that $n=2$. \\

Let $P_3\in\spec_\f(R)$ such that $[P_3]=[P_1P_2]$. Again, by Lemmas \ref{abs} and \ref{regp}, we find elements $e,f\in\O$ such that $eR=P_1P_2P_3$ and $fR=P_1^4P_3^2$. Then we have $e^2\sim_H f$ and $2\in \mathsf{L}_\O(e^2)=\mathsf{L}_\O(f)$, which contradicts Lemma \ref{basic}. Hence our claim is proved, when $b$ is absolutely irreducible in $R$. \\

Let now $aR=P^n$ and $bR=P_1^{n_1}\cdots P_k^{n_k}$ for prime ideals $P,P_1,\ldots, P_k\in \spec_\f(R)$ and positive integers $n, n_1,\ldots,n_k$, where $n=\ord([P])$ and suppose that $a\sim_H b$. There is a positive integer $m$ such that $a^m,b^m\in\O$ and such that $b^m$ is a product of absolutely irreducible elements in $R$. By the transfer property, there is $c\in\O$, dividing $a^m$, such that $c\sim_H d$, where $dR=P_i^{\ord([P_i])}$ and $i\in[1,k]$. By Lemma \ref{abs}, we can assume that $d\in\mathcal{A}(\O)$. Hence $c\in\mathcal{A}(\O)$ and $a\sim_R c$. By the first part of the proof, we obtain $\ord([P_i])=n$ and $[P_i]=[P]$. The only thing left to show is that $n=\sum_{i=1}^{k}n_i$. This follows from the fact that $a$ is an $R$-associate of an $\O$-atom, whence $b$ is an $R$-atom by Lemma \ref{basic}.
\end{proof}

We can now extend this result for arbitrary elements of $\Reg_\f(R)$.

\begin{lemma}\label{beta}
    Suppose that $|\Cl(R)|\neq 2$ and let $a,b\in \Reg_\f(R)$ such that $a\sim_H b$. Then $\boldsymbol{\beta}(a)=\boldsymbol{\beta}(b)$.
\end{lemma}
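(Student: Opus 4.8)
The plan is to reduce the general statement to Lemma \ref{betaabs} by passing to a suitable power. Since $\boldsymbol{\beta}(a),\boldsymbol{\beta}(b)$ lie in $\mathcal{B}(\Cl(R))\subseteq\mathcal{F}(\Cl(R))$ and the latter is free abelian, the map $x\mapsto x^m$ is injective on $\mathcal{F}(\Cl(R))$. Hence it suffices to produce a single $m\in\mathbb{N}$ with $\boldsymbol{\beta}(a^m)=\boldsymbol{\beta}(b^m)$: from this $\boldsymbol{\beta}(a)^m=\boldsymbol{\beta}(b)^m$ forces $\boldsymbol{\beta}(a)=\boldsymbol{\beta}(b)$.

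First I would choose $m$ appropriately. Because $\Reg(\O)\subseteq\Reg_\f(R)$ is a root extension of monoids, some power of $a$ lies in $\Reg(\O)\subseteq\O$; and because $\Cl(R)$ is a torsion group, some power of $b$ can be written as a product of absolutely irreducible elements of $R$. Replacing $m$ by a common multiple, I may assume simultaneously that $a^m\in\O^\bullet$ and that $b^m=w_1\cdots w_s$ with each $w_j\in R$ absolutely irreducible. Since $b\in\Reg_\f(R)$ and $\Reg_\f(R)$ is divisor-closed in $R^\bullet$, each $w_j$ lies in $\Reg_\f(R)$ as well. Moreover $a^m\sim_H b^m$, because $a\sim_H b$.

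Next I would transport the factorization of $b^m$ into $\O$ via the transfer property. Using $a^m\sim_H b^m=w_1\cdots w_s$ together with $R^\bullet\subseteq H$ (Lemma \ref{basic}(i)), I obtain a factorization of $a^m$ into $s$ factors in $H$, each $H$-associated to one of the $w_j$. Since $a^m\in\O^\bullet$ and the inclusion $\O^\bullet\hookrightarrow H$ is a transfer homomorphism, iterating \textbf{(T2)} yields an $\O$-factorization $a^m=v_1\cdots v_s$ with $v_j\sim_H w_j$ for all $j$. As $a^m\in\Reg_\f(R)$ and $\Reg_\f(R)$ is divisor-closed, each $v_j\in\Reg_\f(R)$. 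Now I apply Lemma \ref{betaabs} to the pair $(w_j,v_j)$, whose first entry is absolutely irreducible, to get $\boldsymbol{\beta}(v_j)=\boldsymbol{\beta}(w_j)$ for every $j$. Multiplying and using that $\boldsymbol{\beta}$ is a homomorphism,
\[
\boldsymbol{\beta}(a^m)=\prod_{j=1}^{s}\boldsymbol{\beta}(v_j)=\prod_{j=1}^{s}\boldsymbol{\beta}(w_j)=\boldsymbol{\beta}(b^m),
\]
and the reduction of the first paragraph completes the argument.

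The genuine arithmetic content is already contained in Lemma \ref{betaabs}; the work here is organizational. The step I expect to require the most care is the application of the transfer property: one must ensure that $m$ is large enough that $a^m$ \emph{actually} lies in $\O^\bullet$, so that \textbf{(T2)} applies to it, and that the factors $v_j$ pulled back into $\O$ are again regular, so that Lemma \ref{betaabs} is applicable to each pair $(w_j,v_j)$. Both points rest on the divisor-closedness of $\Reg_\f(R)$ in $R^\bullet$ and on the root-extension property $\Reg(\O)\subseteq\Reg_\f(R)$ recorded in the preliminaries.
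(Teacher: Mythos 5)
Your proof is correct and follows essentially the same route as the paper's: pass to a common power $m$ so that $a^m\in\O$ and $b^m$ is a product of absolutely irreducible elements, split $a^m$ along that factorization using the transfer property \textbf{(T2)}, and apply Lemma~\ref{betaabs} factor by factor before multiplying the results back together. The only cosmetic differences are that the paper phrases the iteration of \textbf{(T2)} as an induction on the number of absolutely irreducible factors and first moves those factors into $\O$ via Lemma~\ref{abs}, a step your argument shows is dispensable since Lemma~\ref{betaabs} only requires both elements to lie in $\Reg_\f(R)$.
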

\begin{proof}
   There is a positive integer $m$ such that $a^m\in\O$ and such that $b^m$ can be written as a product of absolutely irreducible elements in $R$. Clearly, we have $\boldsymbol{\beta}(a)=\boldsymbol{\beta}(b)$ if and only if $\boldsymbol{\beta}(a^m)=\boldsymbol{\beta}(b^m)$ and thus we can assume that $m=1$. By Lemma \ref{abs}, there are $b_1,\ldots,b_k\in\O$, absolutely irreducible elements in $R$ such that $\prod_{i=1}^{k}b_i\sim_R b$. We proceed by induction on the positive integer $k$. If $k=1$, we are done by Lemma \ref{betaabs}. Otherwise, since we have $a\sim_H \prod_{i=1}^{k}b_i$, the transfer property implies the existence of elements $c,d\in\O$ such that $cd=a$, $c\sim_H b_k$ and $d\sim_H \prod_{i=1}^{k-1}b_i$. By the induction hypothesis and Lemma \ref{betaabs}, the claim is then proved.
\end{proof}

\begin{proposition}\label{pic}
    Suppose that $|\Cl(R)|\neq 2$. Then $\Cl(R)\simeq \Pic(\O)$.
\end{proposition}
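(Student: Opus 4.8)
The plan is to verify the criterion recorded after the exact sequence in Section~2: since $\Pic(\O)\simeq\Cl(R)$ is equivalent to $\Reg(\O)\cdot R^\times=\Reg_\f(R)$, and the inclusion $\Reg(\O)\cdot R^\times\subseteq\Reg_\f(R)$ is immediate, it suffices to prove that every $a\in\Reg_\f(R)$ has a regular generator from $\O$, i.e. $a\sim\O$. I will reformulate this divisor-theoretically. Let $\Gamma$ be the subgroup of the group of principal fractional $R$-ideals coprime to $\f$ that is generated by $\{uR:u\in\Reg(\O)\}$. If I can show $aR\in\Gamma$, then writing $aR=uR\,(wR)^{-1}$ with $u,w\in\Reg(\O)$ and setting $t=u/w$, the equalities $\mathsf v_P(t)=\mathsf v_P(a)\ge 0$ for all $P\in\spec(R)$ give $t\in R$ with $tR=aR$; since $wt=u\in\O$ and $w\in\Reg(\O)$, the cancellation property of regular elements forces $t\in\O$, hence $t\in\Reg(\O)$ and $a\sim\O$. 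Thus the entire statement reduces to proving $aR\in\Gamma$ for every $a\in\Reg_\f(R)$.

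To feed $\Gamma$, I record two families from the earlier lemmas. Lemma~\ref{abs} gives $P^{\ord([P])}R\in\Gamma$ for every $P\in\spec_\f(R)$, and Lemma~\ref{regp}(i) gives $PQ\,R\in\Gamma$ whenever $[P]=-[Q]$. Combining the latter with a prime in the inverse class shows the crucial \emph{same-class relations}: if $[P]=[Q]$, choose $Q'\in\spec_\f(R)$ with $[Q']=-[P]$; then $PQ'R$ and $QQ'R$ lie in $\Gamma$, hence so does their quotient $PR\,(QR)^{-1}$. This is exactly the flexibility that $\boldsymbol\beta$ cannot see.

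The engine is as follows. Fix $a\in\Reg_\f(R)$ with $aR=P_1^{n_1}\cdots P_k^{n_k}$. Since $\Reg(\O)\subseteq\Reg_\f(R)$ is a root extension, there is $n$ with $a^n\in\Reg(\O)$. Applying the transfer property to the factorization $a^n=a\cdot a^{n-1}$ in $H$ (recall $R^\bullet\subseteq H$ by Lemma~\ref{basic}) produces $v\in\O$ with $a^n=vw$ and $v\sim_H a$; as $v$ divides $a^n\in\Reg(\O)$ and $\Reg(\O)$ is divisor-closed, $v\in\Reg(\O)$, so $vR\in\Gamma$. Because $v\mid a^n$ in $R$, the ideal $vR$ is supported only on $P_1,\dots,P_k$, whence $vR=P_1^{m_1}\cdots P_k^{m_k}$ with $0\le m_i\le n\,n_i$. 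Moreover $v,a\in\Reg_\f(R)$ and $v\sim_H a$, so Lemma~\ref{beta} yields $\boldsymbol\beta(v)=\boldsymbol\beta(a)$, which means $\sum_{[P_i]=g}m_i=\sum_{[P_i]=g}n_i$ for every class $g\in\Cl(R)$.

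It remains to reconcile $v$ with $a$. The ideal $vR\,(aR)^{-1}=\prod_i P_i^{\,m_i-n_i}$ has, within each class, exponents summing to zero, hence is a product of factors $P_iR\,(P_jR)^{-1}$ with $[P_i]=[P_j]$; by the same-class relations it lies in $\Gamma$. Since $vR\in\Gamma$, we conclude $aR\in\Gamma$, which completes the reduction and the proof. The main obstacle is precisely this last passage: Lemma~\ref{beta} transports information only at the level of classes (equality of the zero-sum sequences $\boldsymbol\beta(v)$ and $\boldsymbol\beta(a)$), whereas $a\sim\O$ is a statement about the actual prime ideals. Two devices bridge the gap, and both require $|\Cl(R)|\neq 2$: peeling $v$ off a power $a^n$ so that its prime support is forced to lie among the primes of $a$ (retaining prime identity), and the same-class relations of Lemma~\ref{regp}(i), which absorb exactly the ambiguity invisible to $\boldsymbol\beta$. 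In the excluded case $|\Cl(R)|=2$ these relations can fail, and $\Pic(\O)$ may be strictly larger than $\Cl(R)$.
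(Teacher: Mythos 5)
Your proof is correct, but it takes a genuinely different route from the paper's. Both arguments run on the same fuel---Lemma \ref{abs}, Lemma \ref{regp}(i), the transfer property for $\O^\bullet\hookrightarrow H$, and Lemma \ref{beta}---but the endgames differ. The paper reduces to atoms $a$ of $\Reg_\f(R)$, multiplies by an auxiliary element $b$ supported on inverse-class primes $Q_i$ (with the crucial choice $Q_i=P_i$ when $\ord([P_i])=2$), extracts via the transfer property an element $d\in\O$ dividing an $\O$-generator of $abR$ with $d\sim_H a$, and then must rule out, prime by prime, that $dR$ picks up some $Q_j$ in place of a $P_i$; that support analysis needs the atomicity of $a$, the reduction to $\sum_i n_i\ge 3$, and the order-two trick. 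You instead peel $v$ off the power $a^n$---legitimate because $\Reg(\O)\subseteq\Reg_\f(R)$ is a root extension by the Section 2 preliminaries (note this fact is unconditional, unlike Proposition \ref{root2}, so there is no circularity)---which forces the support of $vR$ into $\{P_1,\dots,P_k\}$ for free; the price is that $\boldsymbol{\beta}(v)=\boldsymbol{\beta}(a)$ only matches exponents class by class, and you absorb the discrepancy in the group $\Gamma$ using the same-class relations $PR\,(QR)^{-1}\in\Gamma$ for $[P]=[Q]$, correctly derived from Lemma \ref{regp}(i) and the standing assumption that every class contains a prime of $\spec_\f(R)$. What your approach buys: no case distinction, no atoms, no explicit handling of order-two classes (it is hidden inside the proof of Lemma \ref{regp}(i)), and a clean group-theoretic frame, at the cost of introducing fractional ideals and the (true) lattice fact that $\{e\in\Z^r:\sum_i e_i=0\}$ is generated by differences of standard basis vectors. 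What the paper buys: it stays at the level of elements and pins down an actual $R$-associate $d$ of $a$ inside $\O$. Two points you use tacitly, both standard and also tacit in the paper, deserve a line: an element of $\O$ generating an ideal coprime to $\f$ lies in $\Reg(\O)$ (a unit of the finite ring $R/\f$ lying in the subring $\O/\f$ is a unit of $\O/\f$), which is needed at the end of your reduction and each time you feed $\Gamma$; and---a quibble with your closing commentary only---the peeling device itself does not require $|\Cl(R)|\ne 2$; that hypothesis enters solely through Lemmas \ref{regp} and \ref{beta}.
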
 

\begin{proof}
    It is enough to show that $a\sim\O$ for every element $a\in\mathcal{A}(\Reg_\f(R))$. Let $aR=P_1^{n_1}\cdots P_k^{n_k}$ for pairwise distinct prime ideals $P_1,\ldots,P_k\in\spec_\f(R)$ and positive integers $n_1,\ldots,n_k$. By Lemmas \ref{abs} and \ref{regp}, we can assume that $\sum_{i=1}^{k}n_i\geq 3$. Let now $Q_1,\ldots, Q_k\in\spec_\f(R)$ such that $[Q_i]=-[P_i]$ and $Q_i=P_i$ if $\ord([P_i])=2$ for every $i\in[1,k]$. Moreover, let $b\in\Reg_\f(R)$ such that $bR=Q_1^{n_1}\cdots Q_k^{n_k}$. By Lemma \ref{regp}, we have $P_iQ_i\sim\O$ for every $i\in[1,k]$ and thus we can find $c\in\O$ with the property $c\sim_R ab$. By the transfer property, there is $d\in\O$, dividing $c$ such that $d\sim_H a$. By Lemma \ref{beta}, we then have $\boldsymbol{\beta}(a)=\boldsymbol{\beta}(d)$. \\
    
    We will show that for every $i\in[1,k]$, $P_i^{n_i}$ divides $dR$, which, together with the fact that $\boldsymbol{\beta}(a)=\boldsymbol{\beta}(d)$, implies that $a\sim_R d$ and proves our claim. Suppose that $\ord([P_i])=2$. Then $n_i=1$ and $[P_i]\neq [P_j]$ for all $j\neq i$, since we are assuming that $\sum_{i=1}^{k}n_i\geq 3$. Similarly, we have $[P_i]\neq[Q_j]$ for all $j\neq i$. Then $\boldsymbol{\beta}(a)=\boldsymbol{\beta}(d)$ implies that $P_i$ or $Q_i$ divides $dR$. However, we have $P_i=Q_i$ by construction. Suppose now that $\ord([P_i])\geq 3$ (note that $\ord([P_i])=1$ is not possible since $a$ is an $R$-atom and $\sum_{i=1}^{k}n_i\geq 3$) and that $P_i^{n_i}$ does not divide $dR$. Then there is a $Q_j$, dividing $dR$, such that $[Q_j]=[P_i]$. We clearly have $i\neq j$, whence $P_iP_j$ is principal, again contradicting the fact that $\sum_{i=1}^{k}n_i\geq 3$.
\end{proof}

\section{Localizations of transfer Krull orders}

\noindent In this section, we use Proposition \ref{pic} to study the irregular prime ideals and localizations of a transfer Krull order $\O$ before proving our main theorem.  

\begin{proposition}\label{root2}
  Suppose that $\O$ is transfer Krull with Krull monoid $H\supseteq \O^\bullet$ as in Proposition \ref{tkrull} and that $\Pic(\O)\simeq\Cl(R)$. Then $\O\subseteq R$ is a root extension.
\end{proposition}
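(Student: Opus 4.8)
The plan is to prove that the Spec-map is injective; since $R/\f$ is finite, by the criterion recalled in Section~2 (\cite{Ka20a}) this is equivalent to $\O \subseteq R$ being a root extension, and surjectivity is automatic because $R$ is integral over $\O$. So I would argue by contradiction and assume there is $\p \in \spec(\O)$ over which lie $k \ge 2$ primes $P_1,\dots,P_k \in \spec(R)$. Then $\O_\p^\bullet$ is finitely primary of rank $k \ge 2$, say of exponent $\alpha$, so every non-unit of $\O_\p$ has $\mathsf v_{P_i} \ge 1$ for each $i$. The single structural fact I would extract is a ceiling on lengths: localizing any $\O$-factorization of some $y \in \O$ at $\p$, each factor that is a non-unit at $\p$ contributes at least $1$ to $\mathsf v_{P_i}(y)$ for \emph{every} $i$, so there are at most $\min_i \mathsf v_{P_i}(y)$ of them, while the remaining non-unit factors are regular at $\p$ and are bounded by the support of $y$ away from $\p$.

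Next I would set $q_j \in R$ with $q_j R = P_j^{\ord([P_j])}$ (absolutely irreducible in $R$, and in $R^\bullet \subseteq H$ by Lemma~\ref{basic}), and split into two cases according to whether all $q_j$ are units of $H$. If every $q_j \in H^\times$, I would use $\Pic(\O) \simeq \Cl(R)$ (through the realization statement of Section~2) to produce $y \in \O$ with $yR = \prod_j P_j^{\ord([P_j])\,t}$ for large $t$; then $y \sim_R \prod_j q_j^{t}$, so $y \in H^\times$ (recall $R^\times \subseteq H^\times$), whence $y \in H^\times \cap \O = \O^\times$ by \textbf{(T1)}, contradicting that $y$ is a non-unit. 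Otherwise some $q := q_{j_0}$ is a non-unit of $H$; relabel $P_2 := P_{j_0}$. Using the realization statement again I produce $y \in \O$ with $yR = P_2^{B}\prod_{j\ne 2}P_j^{\alpha}\cdot Q$, where $Q \in \spec_\f(R)$ is a single prime making the product principal and $B$ is large. The ceiling gives $\mathsf L_\O(y) \subseteq [1,\alpha+1]$, and since $\O^\bullet \hookrightarrow H$ is a transfer homomorphism, $\mathsf L_H(y) = \mathsf L_\O(y)$ is bounded independently of $B$. On the other hand $y \sim_R q^{\lfloor B/\ord([P_2])\rfloor}\cdot z$ with $z \in R^\bullet \subseteq H$, and since $q$ is a non-unit of $H$ this exhibits an $H$-factorization with at least $\lfloor B/\ord([P_2])\rfloor$ non-unit factors, so $\max \mathsf L_H(y) \ge \lfloor B/\ord([P_2])\rfloor$. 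Choosing $B$ large yields the contradiction, and $k \ge 2$ is needed only to keep $\min_i \mathsf v_{P_i}(y) = \alpha$ small while $\mathsf v_{P_2}(y) = B$ grows.

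The step I expect to be most delicate is the lower bound on $H$-lengths: a priori $H$ could fail to detect the valuations at the irregular primes over $\p$ (equivalently, the $q_j$ could all be units of $H$), and then no long $H$-factorization is available. The dichotomy above is designed precisely for this — if $H$ ignores all the $P_j$ the unit case closes at once via \textbf{(T1)}, and otherwise a single surviving $q$ already forces arbitrarily long $H$-factorizations, against the finitely primary ceiling. The other point needing care is the repeated appeal to $\Pic(\O) \simeq \Cl(R)$ in order to realize elements of $\O$ with prescribed valuations at the primes over $\p$ and prescribed principalizing behavior at one auxiliary regular prime; this is exactly the content recorded in Section~2, and it is the only place where the hypothesis $\Pic(\O)\simeq\Cl(R)$ is used. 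Everything else is bookkeeping with the finitely primary structure of $\O_\p$ and the fact that transfer homomorphisms preserve sets of lengths.
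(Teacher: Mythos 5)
Your proof is correct, and it takes a genuinely different route from the paper's. The paper makes no case distinction: it invokes Lemma \ref{finp} (a finitely primary monoid of rank $s\geq 2$ has atoms with arbitrarily large valuations) to get $x\in\mathcal{A}(\O_\p)$ with $\mathsf{v}_{p_i}(x)>\ord([P_i])$ for all $i$, realizes it as an element $a\in\mathcal{A}(\O)\subseteq\mathcal{A}(H)$ with $aR=Q\,P_1^{\mathsf{v}_{p_1}(x)}\cdots P_s^{\mathsf{v}_{p_s}(x)}$, and then factors $a=b_ic$ in $R$ with $b_iR=P_i^{\ord([P_i])}$; since $c$ lies in every prime over $\p$, a power of $c$ is a non-unit of $\O$, so $c\notin H^\times$, and since $a$ is an $H$-atom this forces $b_i\in H^\times$ for every $i$. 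In other words, the paper proves that your Case A always occurs, and then closes it essentially as you do, via $(b_1\cdots b_s)^n\in\O\cap H^\times=\O^\times$. Your dichotomy lets you skip the paper's hardest step (forcing all $q_j\in H^\times$, which needs Lemma \ref{finp} and the lifting of $\O_\p$-atoms to $\O$-atoms): in Case B you instead play the finitely primary ceiling $\mathsf{L}_\O(y)\subseteq[1,\alpha+1]$ against $\max\mathsf{L}_H(y)\geq\lfloor B/\ord([P_2])\rfloor$, contradicting $\mathsf{L}_H(y)=\mathsf{L}_\O(y)$. This trades the paper's structural analysis of atoms for the cruder, but sufficient, invariant of sets of lengths; both arguments use the rank-$\geq 2$ hypothesis and the realization machinery of Section 2 in the same essential way.

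Two steps you should make explicit. First, your length ceiling counts the factors of $y$ that are units of $\O_\p$ by their ideal support, so it uses that a non-unit of $\O$ generates a proper ideal of $R$, i.e. $R^\times\cap\O=\O^\times$; this is not automatic for orders, but follows here since $\varepsilon\in R^\times\cap\O$ gives $\varepsilon^{-1}\in R^\bullet\subseteq H$ by Lemma \ref{basic}, whence $\varepsilon\in H^\times\cap\O^\bullet=\O^\times$ by Proposition \ref{tkrull}. Second, in Case A you do not actually need the realization statement (whose hypothesis $aR^\times\cap\O\neq\emptyset$ is awkward to verify there): the element $\prod_j q_j$ has positive valuation at every prime over $\p$ and valuation zero at every other prime dividing $\f$, so by the criterion recalled in Section 2 some power $y=\bigl(\prod_j q_j\bigr)^t$ already lies in $\O$, and $y\in H^\times$ gives the contradiction directly.
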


\begin{proof}
    By \cite[Theorem 2]{Ka20a}, we need to show that the Spec-map is bijective. If this is not the case, then there is $\p\in \spec(\O)$ with $\f\subseteq \p$ with at least two $R$-prime ideals lying over $\p$. Let $P_1,\ldots, P_s$ denote these prime ideals, where $s\geq 2$ and let $n_i=\text{ord}([P_i])$ for each $i\in[1,s]$. Then $\O_\p^\bullet$ is a finitely primary monoid of rank $s$. Let $p_1,\ldots,p_s$ 
    be pairwise non-associate prime elements of $\overline{\O_\p}$ such that $p_i\in P_i$ for all $i\in[1,s]$. Moreover, for each $i\in[1,s]$, let $k_i\in\mathbb{N}$ such that $k_i>n_i$. \\

    By Lemma \ref{finp}, there is $x\in \mathcal{A}(\O_\p)$ with $\mathsf{v}_{p_i}(x)\geq k_i$ for every $i\in[1,s]$. If \[P_1^{\mathsf{v}_{p_1}(x)}\cdots P_s^{\mathsf{v}_{p_s}(x)}\] is not principal, take $Q\in\spec_\f(R)$ such that \[QP_1^{\mathsf{v}_{p_1}(x)}\cdots P_s^{\mathsf{v}_{p_s}(x)}\] is principal. Otherwise, set $Q=R$. Moreover, let $a\in R$ be such that \[aR=QP_1^{\mathsf{v}_{p_1}(x)}\cdots P_s^{\mathsf{v}_{p_s}(x)}.\]

     Since $\Pic(\O)\simeq\Cl(R)$, we can assume that $a\in\O$ and $a\sim_{\O_\p}x$. Since $x$ is an atom of $\O_\p$, we obtain that $a\in\mathcal{A}(\O)\subseteq \mathcal{A}(H)$. Let $b_i\in R$ such that $b_iR=P_i^{n_i}$. Since we have $\mathsf{v}_{p_i}(x)> n_i$ by assumption, we can write $a=b_ic$, where $c\in R$ is contained in every prime ideal $P_1,\ldots, P_s$. Then either $b_i$ or $c$ is a unit in $H$. However, $c\not\in H^\times$, since $c^n\in\O$ for some $n\in\mathbb{N}$, which implies that $b_i\in H^\times$. \\

     Hence we find $b_1,\ldots,b_s\in R\cap H^\times$ such that $b_iR=P_i^{n_i}$ for all $i\in [1,s]$. However, there exists $n\in\mathbb{N}$ such that ${(b_1\cdots b_s)}^n \in \O \cap H^\times,$ which is a contradiction. 
    
\end{proof}

\begin{proposition} \label{value}
  Suppose that $\O$ is transfer Krull with Krull monoid $H\supseteq \O^\bullet$ as in Proposition \ref{tkrull} and that $\Cl(R)\simeq\Pic(\O)$. Let $P\in\spec(R)$ with $n=\ord([P])$ such that $P$ divides $\f$ and let $\p=P\cap\O$. Then $\mathsf{v}_\p(\mathcal{A}(\O_\p))\subseteq \{1,2\}$, if $n=2$ and $\mathsf{v}_\p(\mathcal{A}(\O_\p))= \{1\}$ otherwise. In particular, we have $\O\cdot R^\times=R$.
\end{proposition}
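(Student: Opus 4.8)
The plan is to reduce the statement about atoms of the localization $\O_\p$ to a statement about global atoms of $\O$, and then to run a length argument in $H$ of the type used in Lemmas \ref{regp} and \ref{betaabs}. Since $\Pic(\O)\simeq\Cl(R)$, Proposition \ref{root2} shows that $\O\subseteq R$ is a root extension, so the Spec-map is bijective and $P$ is the \emph{unique} $R$-prime lying over $\p$; in particular $\O_\p^\bullet$ is finitely primary of rank $1$ and $\mathsf v_\p=\mathsf v_P$ is well defined. Moreover, by Lemma \ref{basic}(iii) we have $\mathcal A(\O)\subseteq\mathcal A(R)$. First I would show that it suffices to bound $\mathsf v_P(w)$ for global atoms: given $u\in\mathcal A(\O_\p)$ with $m=\mathsf v_\p(u)$, I choose $a\in R$ with $aR=P^mQ$, where $Q\in\spec_\f(R)$ satisfies $[Q]=-m[P]$ (and $Q=R$ if $n\mid m$), and use the globalization fact from Section 2.2 (which is exactly where $\Pic(\O)\simeq\Cl(R)$ enters) to produce $c\in\O$ with $c\sim_R a$ and $c\sim_{\O_\p}u$. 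Factoring $c$ into atoms of $\O$ and localizing at $\p$, all but one factor become units of $\O_\p$; the remaining factor is an atom $w\in\mathcal A(\O)\subseteq\mathcal A(R)$ with $\mathsf v_P(w)=m$. Because $Q$ is either trivial or a single non-principal prime, a short bookkeeping shows $wR=P^m$ when $n\mid m$ and $wR=P^mQ$ otherwise.

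Next I would treat the two divisibility cases. If $n\mid m$, then $P^m$ is principal, and since $w$ is an $R$-atom no proper principal subproduct of $wR$ may occur, forcing $wR=P^m=(aR)^{m/n}$ with $aR=P^n$ absolutely irreducible; as $w\in\mathcal A(R)$ this is only possible when $m/n=1$, i.e.\ $m=n$ and $w$ is absolutely irreducible. It then remains to exclude $n\ge 3$ in this situation: here I would run a variant of the argument in Lemma \ref{betaabs}, comparing $w$ with a regular absolutely irreducible atom $w'\in\mathcal A(\Reg(\O))$ with $w'R=P'^{\,n}$ and $[P']=[P]$ (available by Lemma \ref{abs}), passing to a suitable power so that the $P$-part becomes principal, and reading off a factorization length in $H$ that forces an $\O$-atom which is not an $R$-atom, contradicting Lemma \ref{basic}. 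The group-theoretic fact recalled before Lemma \ref{regp} is what makes $n=2$ genuinely exceptional and permits the value $m=2$ there.

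If $n\nmid m$ and $m\ge 2$, I aim directly for a contradiction. Writing $wR=P^mQ$ with $[Q]=-m[P]\neq 0$, I would consider $w^n$, whose ideal $P^{nm}Q^n$ is a product of the two principal ideals $P^{nm}$ and $Q^n$. Using the globalization fact again I obtain $g\in\O$ with $gR=P^{nm}$, and by Lemma \ref{abs} a regular element $q\in\O$ with $qR=Q^n$; then $w^n\sim_R gq$, hence $w^n\sim_H gq$, and the transfer property yields $\mathsf L_\O(w^n)=\mathsf L_\O(gq)$. Since $w\in\mathcal A(\O)$ we have $n\in\mathsf L_\O(w^n)$, and I would show that a factorization of $gq$ of length $n$ is incompatible with $\mathcal A(\O)\subseteq\mathcal A(R)$ when $m\ge2$ and $n\nmid m$, because any such factorization must split the $P$-part unevenly and produce an $\O$-atom whose $R$-ideal contains a proper principal subproduct. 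This excludes every value $m\ge 2$ with $n\nmid m$, leaving only $m=1$ in this case.

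Combining the two cases gives $\mathsf v_\p(\mathcal A(\O_\p))\subseteq\{1,2\}$ with $2$ occurring only when $n=2$, and $\subseteq\{1\}$ otherwise. Finally, because $\O_\p^\bullet$ is finitely primary of rank $1$, its value monoid $\mathsf v_\p(\O_\p^\bullet)\subseteq\N_0$ is cofinite; an upper bound of $2$ on all atom values then forces $1\in\mathsf v_\p(\mathcal A(\O_\p))$, since otherwise the value monoid would be contained in $2\N_0$, which is not cofinite. Together with $\Pic(\O)\simeq\Cl(R)$ and the bijectivity of the Spec-map, the criterion recalled in Section 2.2 yields $\O\cdot R^\times=R$. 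I expect the main obstacle to be the constructions in the third paragraph and the excluded $n\ge3$ sub-case of the second: unlike in Lemmas \ref{regp} and \ref{betaabs}, the prime $P$ divides $\f$, so the regular-case tools do not apply to it directly, and one must route through powers of $w$ and globalized generators of $P$-powers to set up the decisive length comparison.
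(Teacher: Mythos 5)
Your opening reduction (globalizing $u\in\mathcal A(\O_\p)$ to an atom $w\in\mathcal A(\O)\subseteq\mathcal A(R)$ with $w\sim_{\O_\p}u$), the forcing of $m=n$ when $n\mid m$, and the closing cofiniteness argument for $\O\cdot R^\times=R$ are all correct and parallel the paper's proof. Both hard exclusions, however, fail as proposed. Take first $n\nmid m$ with $1<m<n$ (for $m>n$ your own reduction already finishes, since $P^n$ is then a proper principal subproduct of $wR=P^mQ$, contradicting $w\in\mathcal A(R)$). Your claimed incompatibility is false: choosing $g=w^n/q$, which lies in $\O$ because $q\in\Reg(\O)$, we get $gq=w^n$, and $w\cdot w\cdots w$ is a factorization of $gq$ of length exactly $n$ that splits the $P$-part perfectly evenly; moreover each factor $w$ \emph{is} an $R$-atom when $1<m<n$, because no proper nonempty subproduct $P^iQ^j$ of $P^mQ$ is principal (its class is $(i-jm)[P]\neq 0$). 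Worse, the strategy is self-defeating: since $\mathcal A(\O)\subseteq\mathcal A(R)$ by Lemma \ref{basic}, an ``$\O$-factorization of length $n$ containing a non-$R$-atom'' cannot exist at all, so your plan amounts to proving $n\notin\mathsf L_\O(gq)$, contradicting the $n\in\mathsf L_\O(gq)$ you just derived. No length bookkeeping can work here. The paper's argument is structural, not length-based: it globalizes with \emph{two} auxiliary primes, producing $a\in\O$ with $a\sim_{\O_\p}u$ and $aR=Q_1Q_2P^m$, where $[Q_1]=-[P]$ and $[Q_2]=-[P^{m-1}]$. Then $Q_1P$ is a proper principal subproduct, so $a\notin\mathcal A(R)$, hence $a\notin\mathcal A(\O)$; but in any factorization $a=bc$ into non-units of $\O$ neither factor can avoid $P$ (no nonempty proper subproduct of $Q_1Q_2$ is principal, and $R^\times\cap\O=\O^\times$), so $b$ and $c$ are both non-units of $\O_\p$, contradicting $a\sim_{\O_\p}u\in\mathcal A(\O_\p)$. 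Your single prime $Q$ with $[Q]=-m[P]$ cannot trigger this mechanism, precisely because it makes $P^mQ$ an $R$-atom.

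Second, for $m=n$ and $n\ge3$ your comparison class is wrong. With $[P']=[P]$, every $R$-atom dividing $P^aP'^b$ (where $n\mid a+b$) generates some $P^iP'^{n-i}$, so all $R$-factorizations of such elements have equal length; the $\O$- and $H$-factorizations inherit this, and neither lengths nor the transfer property yield any contradiction --- the configuration is perfectly consistent. What is needed is the \emph{inverse} class. The paper takes $Q\in\spec_\f(R)$ with $[Q]=-[P]$, globalizes to $a\in\O$ with $a\sim_{\O_\p}u$ and $aR=Q^nP^n$, and writes $a=de$ in $H$ with $dR=QP$, $eR=(QP)^{n-1}$; by (T2) there are $b,c\in\O$ with $a=bc$, $b\sim_H d$, $c\sim_H e$, and the local-atom constraint forces $\{bR,cR\}=\{P^n,Q^n\}$. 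Then $c$ is an $\O$-atom, hence an $H$-atom, while $e\sim_H c$ is a product of $n-1\ge2$ non-units of $H$ (each generator of $QP$ has a power in $\O\setminus\O^\times$, so it is not in $H^\times$), a contradiction. (One can also finish by noting $\max\mathsf L_H(a)\ge n$ while the local constraint gives $\mathsf L_\O(a)=\{2\}$.) This also shows why $n=2$ survives: there $-[P]=[P]$ and $e$ generates $QP$, an $R$-atom, so the argument collapses; it is this coincidence of classes, not the group-theoretic fact recalled before Lemma \ref{regp}, that makes $n=2$ exceptional.
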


     \begin{proof}
        Suppose first that there is  $x\in \mathcal{A}(\O_\p)$ with $\mathsf{v}_\p(x)>n$. If $P^{\mathsf{v}_\p(x)}$ is not principal, let $Q\in\spec_\f(R)$ such that $QP^{\mathsf{v}_\p(x)}$ is principal. Otherwise set $Q=R$. Since $\Cl(R)\simeq \Pic(\O)$, we find $a\in\O$ with $aR=QP^{\mathsf{v}_\p(x)}$ such that $a\sim_{\O_\p}x$. Then, since $\mathsf{v}_\p(x)>n$, we see that $a$ is an $\O$-atom but not an $R$-atom, a contradiction. This proves the claim for $n\leq 2$. \\

        Suppose now that $n\geq 3$ and that there is $x\in\mathcal{A}(\O_\p)$ with $\mathsf{v}_\p(x)=m$, where $1<m<n$. Similar to above, we find $a\in\O$ and $Q_1,Q_2\in\spec_\f(R)$ satisfying $aR=Q_1Q_2P^m$, $[Q_1]=-[P]$, $[Q_2]=-[P^{m-1}]$ and $a\sim_{\O_\p}x$. Since $a$ is not an $R$-atom, it cannot be an $\O$-atom, whence there are non-units $b,c\in \O$ such that $a=bc$. However, both $b$ and $c$ are contained in $P$ and thus both $b$ and $c$ are proper divisors of $x$ in $\O_\p$, a contradiction. \\
   
        Suppose now that there is $x\in\mathcal{A}(\O_\p)$ with $\mathsf{v}_\p(x)=n$. Take $Q\in\spec_\f(R)$ with $[Q]=-[P]$ and $a\in\O$ such that $aR=Q^nP^n$ and $a\sim_{\O_\p}x$. Since $a$ is not an $R$-atom, it is not an $\O$-atom, whence there is $b\in\O$, properly dividing $a$. Note that, if $b$ is contained in $P$, then $bR=P^n$, since $x\in\mathcal{A}(\O_p)$. Otherwise we have $bR=Q^n$. Let $d,e\in R$, satisfying $dR=QP$ and $eR=Q^{n-1}P^{n-1}$.  Then, by the transfer property, there are $b,c\in\O$ such that $a=bc, b\sim_H d$ and $c\sim_H e$. However, $c$ is an $\O$-atom, but $e$ is not an $R$-atom, a contradiction. \\

        Hence $\mathsf{v}_\p(\mathcal{A}(\O_\p))\subseteq \{1,2\}$, if $n=2$ and $\mathsf{v}_\p(\mathcal{A}(\O_\p))= \{1\}$ otherwise. Moreover, we have $1\in\mathsf{v}_\p(\mathcal{A}(\O_\p))$ for all $\p\in\spec(\O)$ (note that $\mathsf{v}_\p(\mathcal{A}(\O_\p))=\{2\}$ is impossible since $\O_\p^\bullet$ is a finitely primary monoid of finite exponent), whence it follows that $\O\cdot R^\times=R$.
     \end{proof}

\begin{proof}[Proof of Theorem 1] (1) Suppose that $|\Cl(R)|\neq 2$. Let $P_1,\ldots,P_k\in\spec(R)$ be the prime ideals that divide $\f$, let $\p_i=P_i\cap\O$ and let $p_i$ be a prime element of $\overline{\O_{\p_i}}$ for every $i\in[1,k]$. If $\O$ is transfer Krull, we obtain $\Cl(R)\simeq \Pic(\O)$ by Proposition \ref{pic}. Then, by Proposition \ref{value}, we have  $\O\cdot R^\times=R$ and $\mathsf{v}_{\p_i}(\mathcal{A}(\O_{\p_i}))=\{1\}$ for every $i\in[1,k]$ such that $\ord([P_i])\neq 2$. \\ 

    Suppose now that $\ord([P_i])=2$ for some $i\in[1,k]$. Then $\mathsf{v}_{\p_i}(\mathcal{A}(\O_{\p_i}))\subseteq \{1,2\}$ by Proposition \ref{value}. Write $P=P_i$ and $\p=\p_i$ and suppose that there is $x\in\mathcal{A}(\O_\p)$ with $\mathsf{v}_\p(x)=2$. Then we find non-principal prime ideals $Q_1,Q_2,Q_3,Q_4\in\spec_\f(R)$ such that $[Q_1]\neq [P]$, $[Q_1]=-[Q_2],[Q_3]=[Q_1P]$ and $[Q_3]=-[Q_4]$ and an element $a\in\O$ with $aR=Q_1Q_2Q_3Q_4P^2$ such that $a\sim_{\O_\p}x$. By Proposition \ref{value}, we have $\O\cdot R^\times=R$, whence there are $b,c\in\O$ with $bR=Q_2Q_3P$ and $cR=Q_1Q_4P$. We clearly have $b,c\in\mathcal{A}(\O)\subseteq \mathcal{A}(H)$ and consequently $2\in\mathsf{L}_H(bc)=\mathsf{L}_H(a)$. However, if $d,e\in \mathcal{A}(\O)\subseteq \mathcal{A}(R)$ such that $a=de$, then $d$ and $e$ are both contained in $P$, which contradicts the fact that $x\in\mathcal{A}(\O_\p)$.  \\

    Hence we have shown that $\mathsf{v}_{\p_i}(\mathcal{A}(\O_{\p_i}))=\{1\}$ for all $i\in[1,k]$. \\

     To prove the converse, we will show that the inclusion $\O^\bullet\hookrightarrow R^\bullet$ is a transfer homomorphism. By assumption, we have $\O\cdot R^\times=R$ and also $\O\cap R^\times=\O^\times$. Thus the only thing left to show is that if $a,b\in R$  with $ab\in\O$, then there is $\varepsilon\in R^\times$ such that $\varepsilon a,\varepsilon^{-1}b \in\O$. \\

     For $i\in[1,k]$, let $a_i=\mathsf{v}_{P_i}(a)$, $b_i=\mathsf{v}_{P_i}(b)$ and let $\Phi:\O^\bullet\rightarrow \prod_{i=1}^{k}{{(\O_{\p_i}^\bullet)}_{\text{red}} }$ denote the canonical homomorphism. Since $\mathsf{v}_{\p_i}(\mathcal{A}(\O_{\p_i}))=\{1\},$ we find elements $x,y\in \prod_{i=1}^{k}{{(\O_{\p_i}^\bullet)}_{\text{red}} }$ such that $\mathsf{v}_{p_i}(x)=a_i$ and $\mathsf{v}_{p_i}(y)=b_i$ for all $i\in[1,k]$ and $\Phi(ab)=xy$. Moreover, we find $\varepsilon_1,\varepsilon_2\in R^\times$ such that $\varepsilon_1 a,\varepsilon_2 b\in\O$ and $\Phi(\varepsilon_1 a)=x,\Phi(\varepsilon_2 b)=y$. Then, since $ab\sim_R \varepsilon_1 a \cdot\varepsilon_2 b$ and $\Phi(ab)=\Phi(\varepsilon_1 a \cdot\varepsilon_2 b)$, we also have $ab\sim_\O \varepsilon_1 a \cdot\varepsilon_2 b$. Hence there is $\varepsilon\in R^\times$ such that $\varepsilon a\in \O$ and $ab= \varepsilon a\cdot \varepsilon_2 b$. This implies that $\varepsilon_2=\varepsilon^{-1}$ and proves the claim. \\

    \noindent (2) Suppose that $|\Cl(R)|=2$, which implies that $R$ is half-factorial. If $\O$ is transfer Krull, the claim follows from Proposition \ref{value}, once we prove that $\Cl(R)\simeq\Pic(\O)$. By Lemma \ref{abs}, we have $\pi\sim \O$ for every prime element $\pi\in \Reg_\f(R)$, which, by assumption implies that the kernel of the canonical surjection $\Pic(\O)\to\Cl(R)$ is trivial. \\

     For the converse, we claim that it is enough to show that $\mathcal{A}(\O)\subseteq \mathcal{A}(R)$. Indeed, for every $a\in\O$, we have $\mathsf{L}_\O(a)\subseteq \mathsf{L}_R(a)$ and since $R$ is half-factorial, we have $|\mathsf{L}_\O(a)|\leq |\mathsf{L}_R(a)|=1$. Hence $\O$ is half-factorial and thus transfer Krull. \\

     Let $a\in \mathcal{A}(\O)$ and suppose that we can write $a=bc$ for non-units $b,c\in R$. Then for all $\varepsilon_1,\varepsilon_2\in R^\times$ such that $\varepsilon_1 b,\varepsilon_2 c\in \O$, we have $a\not\sim_\O \varepsilon_1b\cdot\varepsilon_2c$.
     This implies that there is $P\in\spec(R)$ such that $a\not\sim_{\O_\p}\varepsilon_1b\cdot \varepsilon_2c$ for every such $\varepsilon_1,\varepsilon_2\in R^\times$, where $\p=P\cap\O$. Since we have $\O\cdot R^\times=R$, this implies that for every $x,y\in \O_{\p}$ with $\mathsf{v}_\p(x)=\mathsf{v}_P(b)$ and $\mathsf{v}_\p(y)=\mathsf{v}_P(c)$, we have $a\not\sim_{\O_\p}xy$.  
     This is clearly only possible if $2\in\mathsf{v}_\p(\mathcal{A}(\O_\p))$ (and thus $\mathsf{v}_\p(\mathcal{A}(\O_\p))=\{1,2\}$ since the finitely primary monoid $\O_\p^\bullet$ has finite exponent), $\mathsf{v}_P(a)$ is even and both $\mathsf{v}_P(b)$ and $\mathsf{v}_P(c)$ are odd. In particular, $P$ is not principal by assumption and we have $\mathsf{v}_P(a)\geq 2$. Hence we can write $a=de$ for elements $d,e\in R$, where $dR=P^2$. Since $a$ is not an $R$-atom, $e$ is a non-unit and for all $\varepsilon_1,\varepsilon_2\in R^\times$ such that $\varepsilon_1 d,\varepsilon_2 e\in \O$, we have $a\not\sim_\O \varepsilon_1d\cdot\varepsilon_2e$. However, as we have shown, this implies that $\mathsf{v}_Q(d)$ is odd for some $Q\in\spec(R)$, a contradiction.
\end{proof}

 \noindent \textbf{Remark} (1) Although it is unknown whether the additional assumption about the Picard group $\Pic(\O)$ is necessary for the second part of Theorem \ref{thm}, there is evidence that the case $|\Cl(R)|$=2 is indeed a special one. In \cite{Ra-23}, an example of a half-factorial order $\O$ in a cubic number field and a prime ideal $\p\in\spec(\O)$, for which $\mathsf{v}_\p(\mathcal{A}(\O_\p))=\{1,2\}$ can be found. \\

\noindent (2) If $\O$ is transfer Krull with $|\Cl(R)|=2$ and there is $\p\in\spec(\O)$ with $\mathsf{v}_\p(\mathcal{A}(\O_\p))=\{1,2\}$, then both $R$ and $\O$ are half-factorial, however, $\O^\bullet\hookrightarrow R^\bullet$ is not a transfer homomorphism. Indeed, let $P$ be the non-principal prime ideal lying over $\p$. Then we find a non-principal $Q\in\spec_\f(R)$ and an element $a\in \O$ with $aR=P^2Q^2$ such that $a$ is an atom in $\O_\p$. We can write $a=bc$, where $b,c\in R$ with $bR=cR=PQ$. Then, by the transfer property, we find $d,e\in\O$ such that $d\sim_R b, e\sim_R c$ and $a=de$. However, we have $\mathsf{v}_\p(d)=\mathsf{v}_\p(e)=1$, contradicting the fact that $a$ is an atom in $\O_\p$.

\providecommand{\bysame}{\leavevmode\hbox to3em{\hrulefill}\thinspace}
\providecommand{\MR}{\relax\ifhmode\unskip\space\fi MR }
\providecommand{\MRhref}[2]{%
  \href{http://www.ams.org/mathscinet-getitem?mr=#1}{#2}
}
\providecommand{\href}[2]{#2}

\end{document}